\def\unprotectedboldentry#1{\textcolor{Red}{\textbf{#1}}}
\def\boldentry{\protect\unprotectedboldentry}
\newcommand{\tikztableauinternal}[1]{
    \def\newtableau{#1}
    \coordinate (x) at (-0.5,0.5);
    \coordinate (y) at (-0.5,0.5);
    \foreach \row in \newtableau {
        \coordinate (x) at ($(x)-(0,1)$);
        \coordinate (y) at (x);
        \foreach \entry in \row {
            \ifthenelse{\equal{\entry}{X}}
               {
                \node (y) at ($(y) + (1,0)$) {};
                \fill[color=gray!10] ($(y)-(0.5,0.5)$) rectangle +(1,1);
                \draw[color=gray] ($(y)-(0.5,0.5)$) rectangle +(1,1);
               }
               {
                \ifthenelse{\equal{\entry}{\boldentry X}}
                   {
                    \node (y) at ($(y) + (1,0)$) {};
                    \fill[color=gray] ($(y)-(0.5,0.5)$) rectangle +(1,1);
                    \draw ($(y)-(0.5,0.5)$) rectangle +(1,1);
                   }
                   {
                    \node (y) at ($(y) + (1,0)$) {\entry};
                    \draw ($(y)-(0.5,0.5)$) rectangle +(1,1);
                   }
               }
            }
        }
}
\newcommand{\tikztableau}[2][scale=0.6,every node/.style={font=\small}]{
    \begin{array}{c}
    \begin{tikzpicture}[#1]
        \tikztableauinternal{#2}
    \end{tikzpicture}
    \end{array}}
\newcommand{\tikztableausmall}[1]{\tikztableau[scale=0.45,every node/.style={font=\rm\small}]{#1}}
\def\uu{\mathbf u}
\def\hh{\mathbf h}
\def\bb{\mathbf b}
\def\AA{\mathbb{A}}
\def\BB{\mathbb{B}}
\def\XX{\mathfrak{X}}
\def\downgraph{\mathcal G^{\downarrow}}
\def\ascomp{\operatorname{ascomp}}
\newcommand\kschur[1][k]{s^{(#1)}}
\newcommand\dualkschur{\widetilde F}
\newcommand\nckschur[1][k]{\mathfrak{s}^{(#1)}}
\def\strongstrip{\rightarrowtriangle}
\def\size{\operatorname{size}}
\def\kBounded{\mathcal B^{(k)}}
\def\Langle{\left\langle}
\def\Rangle{\right\rangle}
\def\mod{\operatorname{\rm{mod}}}
\newtheorem{Theorem}{Theorem}[section]
\newtheorem{Proposition}[Theorem]{Proposition}
\newtheorem{Corollary}[Theorem]{Corollary}
\newtheorem{Lemma}[Theorem]{Lemma}
\newtheorem{Example}[Theorem]{Example}
\newtheorem{Remark}[Theorem]{Remark}
\newtheorem{Definition}[Theorem]{Definition}
\begin{document}

\title[Combinatorial expansions for non-commutative $k$-Schur functions]{Combinatorial expansions for families of non-commutative $k$-Schur functions}
\author{Chris Berg \and Franco Saliola \and Luis Serrano}
\date{\today}

\begin{abstract}
We apply down operators in the affine nilCoxeter algebra to yield explicit combinatorial expansions for certain families of non-commutative $k$-Schur functions. This yields a combinatorial interpretation for a new family of $k$-Littlewood-Richardson coefficients.
\end{abstract}

\maketitle
%\tableofcontents

\section{Introduction}

The $k$-Schur functions of Lapointe, Lascoux and Morse \cite{LLM03} first arose
in the study of Macdonald polynomials. Since then, their study has flourished;
see for instance \cite{LM03, LM05, LM07, LS07, LLMS10, Lam10} and the
references therein. This is due, in part, to an important geometric
interpretation of the Hopf algebra $\Lambda_{(k)}$ of $k$-Schur functions and
its dual Hopf algebra $\Lambda^{(k)}$: these algebras are isomorphic to the
homology and cohomology of the affine Grassmannian in type~A \cite{Lam08}.
Under this isomorphism, the $k$-Schur functions map to the Schubert basis of
the homology and the dual $k$-Schur functions (also called the affine Schur
functions) map to the Schubert basis of the cohomology.

An important problem in this field is to find a
$k$-Littlewood--Richardson rule, namely, a combinatorial interpretation for the
(nonnegative) coefficients in the expansion
\begin{equation}\label{eq:klr}
\kschur_{\mu} \kschur_{\nu} = \sum_{\lambda} c^{\lambda, (k)}_{\mu, \nu} \kschur_{\lambda}.
\end{equation}
The $c^{\lambda, (k)}_{\mu, \nu}$ are called the
$k$-Littlewood--Richardson-coefficients, and are of high relevance in
combinatorics and geometry. It was proved by Lapointe and Morse \cite{LM08}
that special cases of these coefficients yield the 3-point Gromov--Witten
invariants. These invariants are the structure constants of the quantum
cohomology of the Grassmanian; they count the number of rational curves of
a fixed degree in the Grassmannian. Benedetti, Bergeron and Zabrocki \cite{BBZ} and Morse and Schilling \cite{MS} have both recently discovered other ways to compute these numbers in special cases. The 3-point Gromov-Witten invariants of
flag varieties are also $k$-Littlewood--Richardson coefficients; see
\cite{LSGW,leung_gromov-witten_2012}.

As an approach to finding the $k$-Littlewood--Richardson coefficients, Lam
\cite{Lam06} identified $\Lambda_{(k)}$ with the Fomin--Stanley subalgebra
$\BB$ of the affine nilCoxeter algebra $\AA$ of the affine symmetric group $W$.
Specifically, he constructed a family of elements $\nckschur_{\lambda}\in\BB$
that map under this isomorphism to the $k$-Schur functions
$\kschur_{\lambda}$. Furthermore, he proved \cite[Proposition 42]{Lam06} that
finding the $k$-Littlewood--Richardson rule is equivalent to finding the
expansion of the $\nckschur_{\lambda}$ in the ``standard basis'' $\uu_w$ of
$\AA$. Explicitly, he proved that the coefficients in \eqref{eq:klr} appear as
coefficients in the expansion
\begin{equation}\label{eq:expansion}
\nckschur_\lambda = \sum_{w\in W} d_\lambda^w \uu_w.
\end{equation}

The goal of this article is to obtain expansions of the $\nckschur_\lambda$ in $\AA$ for certain families of shapes. In Sections \ref{sec:comb_desc} and \ref{sec:expansions}, we use the Pieri
operators to find explicit combinatorial interpretations for the coefficients
in (\ref{eq:expansion}) (and thus, for the coefficients in (\ref{eq:klr}) as
well) for certain families of shapes. We determine the expansion in $\AA$ of
$\nckschur_\lambda$ for the shapes $\lambda = (c^{k-c},c-i)$, for all $i \geq
0$. These correspond to rectangles of $c$ rows and $k+1-c$ columns with a
horizontal strip of length $i$ removed. See Theorem \ref{thm:rectangle-strip} for the precise statement. The case $i=0$ was first established in
\cite{BBTZ11}, the case $i=1$ was first established in \cite{BSS11}.

This manuscript is the third instalment in the `down operators' saga by the authors. The interested reader would do well to first consult \cite{BSS11} and \cite{BSS12}. 

\subsection{Acknowledgements}
We would like to thank Nantel Bergeron, Tom Denton, Thomas Lam, Steven Pon, Jennifer Morse, Anne
Schilling, and Mike Zabrocki for helpful discussions throughout the
course of this research.

This research was facilitated by computer exploration using the open-source
mathematical software system \texttt{Sage}~\cite{sage} and its algebraic
combinatorics features developed by the \texttt{Sage-Combinat} community
\cite{sage-combinat}.

\section{Background and Notation}

\subsection{Affine symmetric group}
Fix a positive integer $k$. Let $W$ denote the affine symmetric group with
simple generators $s_0, s_1, \ldots, s_k$. There is an
interpretation of $W$ as the group of permutations $w: \mathbb Z \to \mathbb Z$
satisfying $w(i+k+1) = w(i) + k+1$ for all $i\in\mathbb Z$ and $\sum_{i=1}^{k+1}
w(i)=\sum_{i=1}^{k+1} i$. Let $t_{i,j}$ be the element of $W$ that interchanges the
integers $i$ and $j$ and fixes all integers not congruent to $i$ or $j$ modulo
$k+1$.

Let $W_0$ denote the subgroup of $W$ generated by $s_1, \dots, s_k$ and let
$W^0$ denote the set of minimal length coset representatives of $W/W_0$.
Elements of $W^0$ are called \emph{affine Grassmannian elements} or
\emph{$0$-Grassmannian elements}. There are bijections between $0$-Grassmannian
elements, $k$-bounded partitions, and $(k+1)$-cores. We will not review these
here, but refer the reader to \cite{LM05}. For a $k$-bounded partition $\lambda$,
we let $w_\lambda$ denote the corresponding element of $W^0$. Let $\kBounded$
denote the set of $k$-bounded partitions.

\subsection{Affine nilCoxeter algebra}
Let $\AA$ denote the \emph{affine nilCoxeter algebra} of $W$: this is the
algebra generated by $\uu_0, \uu_1, \dots, \uu_k$ with relations:
\begin{gather*}
    \uu_i^2 = 0 \text{ for all $i$}; \\
    \uu_i \uu_{i+1} \uu_i = \uu_{i+1} \uu_i \uu_{i+1}
        \text{ with $i+1$ taken modulo $k+1$}; \\
    \uu_i \uu_j = \uu_j \uu_i
        \text{ if $i - j \neq \pm1$ modulo $k+1$.}
\end{gather*}
It follows that a basis of $\AA$ is given by the elements
$\uu_w = \uu_{s_{i_1}}\uu_{s_{i_2}}\cdots\uu_{s_{i_l}}$,
where $w = s_{i_1}s_{i_2}\cdots s_{i_l}$ is a reduced word for $w \in W$.
We define an inner product on $\AA$ by
$\langle \uu_v, \uu_w \rangle_\AA = \delta_{u,v}$.

\subsection{Affine Fomin--Stanley subalgebra}
An element $w \in W$ is said to be \emph{cyclically decreasing} if there exists
a reduced factorization $s_{i_1}\cdots s_{i_j}$ of $w$ satisfying: each letter
occurs at most once; and, for all $m$, if $s_m$ and $s_{m+1}$ both appear in
the reduced factorization, then $s_{m+1}$ precedes $s_m$. If $D \subsetneq \{0,
1, \dots, k\}$, then there is a unique cyclically decreasing element $w_D$ with
letters $\{s_d : d \in D\}$. Let $\uu_D = \uu_{w_D}$ denote the corresponding
basis element of $\AA$. For $i \in \{0, 1, \dots, k\}$, let
\begin{align*}
\hh_i = \sum_{\substack{D \subset I \\ |D| = i}} \uu_D \in \AA.
\end{align*}
By a result of Thomas Lam \cite{Lam06}, the elements $\{\hh_i\}_{i \leq k}$ commute
and freely generate a subalgebra $\BB$ of $\AA$ called the \emph{affine
Fomin--Stanley subalgebra}. The elements $\hh_\lambda = \hh_{\lambda_1} \dots
\hh_{\lambda_t}$, for all $k$-bounded partitions $\lambda = (\lambda_1, \dots,
\lambda_t)$, form a basis of $\BB$.

\subsection{Symmetric functions}
\label{kspace}

Let $\Lambda$ denote the ring of symmetric functions. For a partition
$\lambda$, we let $m_\lambda$, $h_\lambda$ and
$s_\lambda$ denote the monomial, homogeneous and Schur
symmetric function, respectively, indexed by $\lambda$.

Let $\Lambda_{(k)}$ denote the subalgebra of $\Lambda$ generated by $h_0$,
$h_1$, $\dots$, $h_k$. The elements $h_\lambda$ with $\lambda_1 \leq k$ form a
basis of $\Lambda_{(k)}$. Let $\Lambda^{(k)} = \Lambda/I_k$ denote the quotient
of $\Lambda$ by the ideal $I_k$ generated by $m_\lambda$ with $\lambda_1 > k$.
The equivalence classes in $\Lambda^{(k)}$ of the elements $m_\lambda$ with
$\lambda_1 \leq k$ form a basis of $\Lambda^{(k)}$. 

The \emph{Hall inner product} of symmetric functions is defined by
$$
 \langle h_\lambda, m_\mu \rangle_\Lambda = 
 \langle m_\lambda, h_\mu \rangle_\Lambda = 
 \langle s_\lambda, s_\mu \rangle_\Lambda = 
 \delta_{\lambda,\mu}.
$$
Observe that every element of the ideal $I_k$ is orthogonal to every element of
$\Lambda_{(k)}$ with respect to this inner product. Hence, it induces a
pairing $\langle \cdot, \cdot \rangle$ between $\Lambda_{(k)}$ and
$\Lambda^{(k)}$. In particular, $\langle f, g \rangle = \langle f, \widetilde g
\rangle$ for $f \in \Lambda_{(k)}$, $g \in \Lambda^{(k)}$ and any preimage
$\widetilde g$ of $g$ under the quotient map $\Lambda \to \Lambda^{(k)}$.
For an element $f$ in $\Lambda^{(k)}$, we let $f^\perp : \Lambda_{(k)} \to
\Lambda_{(k)}$ denote the linear operator that is adjoint to multiplication by $f$
with respect to $\Langle \cdot, \cdot \Rangle$.

\subsection{Affine Schur functions}
The affine Schur functions form a distinguished basis of $\Lambda^{(k)}$.
For $w \in W$, the \emph{affine Stanley symmetric function} is defined as
%\begin{gather*}
%    \dualkschur_w(X) = \sum_{a = (a_1, \dots, a_t)}
%    \big\langle \hh_{a_1} \dots \hh_{a_t}, \uu_w \big\rangle_\AA x_1^{a_1} \dots x_t^{a_t}.
%\end{gather*}
%where the sum ranges over compositions of length equal to the length of $w$
%satisfying $a_i \in \{0, \dots, k\}$. Since the elements $\hh_i$ commute,
\begin{gather*}
    \dualkschur_w = \sum_{\lambda \in \kBounded}
    \big\langle \hh_\lambda, \uu_w \big\rangle_\AA \, m_\lambda.
\end{gather*}
These functions are elements of $\Lambda^{(k)}$, but they are not linearly
independent. For a $k$-bounded partition $\lambda$, let $\dualkschur_\lambda =
\dualkschur_{w_\lambda}$, where $w_\lambda$ denotes the $0$-Grassmannian
element corresponding to $\lambda$. The functions $\dualkschur_\lambda$ are
called \emph{affine Schur functions} (or \emph{dual $k$-Schur functions}) and
they form a basis of $\Lambda^{(k)}$. See for instance \cite{Lam06, LM08}.

\subsection{$k$-Schur functions}
\label{ss:kschurs}
The $k$-Schur functions are a distinguished basis of $\Lambda_{(k)}$.
They are defined as the duals of the affine Schur functions with respect
to the inner product $\langle \cdot, \cdot \rangle$ on $\Lambda_{(k)} \times
\Lambda^{(k)}$. That is, they satisfy
$\langle \kschur_\lambda, \dualkschur_\mu \rangle = \delta_{\lambda,\mu}$
for all $k$-bounded partitions $\lambda$ and $\mu$.
Equivalently, they are uniquely defined by the \emph{$k$-Pieri rule}:
\begin{align*}
    h_i \kschur_\lambda = \sum \kschur_\nu
\end{align*}
where the sum ranges over all $k$-bounded partitions $\nu$ such that
$w_\nu w_\lambda^{-1}$ is cyclically decreasing of length $i$.
It follows from duality that
\begin{align*}
    h_\mu = \sum_{\lambda\in\kBounded}
    \big\langle \hh_\lambda, \uu_{w_\mu} \big\rangle_\AA
    \, \kschur_\lambda.
\end{align*}

\subsection{Non-commutative $k$-Schur functions}
\label{ss:noncommkschurs}
The algebras $\Lambda_{(k)}$ and $\BB$ are isomorphic with
isomorphism given by $h_\lambda \mapsto \hh_\lambda$.
We denote by $\nckschur_\lambda$ the image of the
$k$-Schur function $\kschur_\lambda$ under this isomorphism.
In the literature, $\nckschur_\lambda$ is called
a \emph{non-commutative $k$-Schur function}.
They have the following expansion \cite[Proposition~42]{Lam06}:
\begin{gather}
    \label{eq:kschurexpansioninA}
    \nckschur_\lambda = \sum_{w \in W}
    \Langle \kschur_\lambda, \dualkschur_w \Rangle
    \, \uu_w.
\end{gather}
That is, the coefficient of $\uu_w$ in $\nckschur_\lambda$ is equal
to the coefficient of $\dualkschur_\lambda$ in $\dualkschur_w$:
\begin{gather}
    \label{eq:relationbetweenpairings}
    \Langle \kschur_\lambda, \dualkschur_w \Rangle
    =
    \Langle \nckschur_\lambda, \uu_w \Rangle_\AA
\end{gather}
and so
\begin{gather*}
    \dualkschur_w = \sum_{\lambda\in\kBounded}
    \Langle \nckschur_\lambda, \uu_w \Rangle_\AA \dualkschur_\lambda.
\end{gather*}
Consequently, $\nckschur_\lambda$ contains exactly one term $\uu_w$ with $w \in
W^0$ and its coefficient is $1$. Furthermore, if $\sum_{w} c_w \uu_w$ is known
to lie in $\BB$, then
$\sum_{w} c_w \uu_w = \sum_{\lambda} c_{w_\lambda} \nckschur_{\lambda}$.

\section{Down operators}

In this section, we recall definition of the down operators, $D_J$, on the affine nilCoxeter algebra $\AA$ defined in \cite{BSS12}. The definitions are dependent upon the combinatorics introduced by Lam, Lapointe, Morse, and Shimozono in \cite{LLMS10}.

% for y in x.bruhat_lower_covers():
%     (i,j) = to_transposition(y.inverse()*x)
%     S = straddling_transpositions((i,j), n)
%     for (a,b) in S:
%         yb = apply_affine_permutation(y,b)
%         G.add_edge(x,y,yb)
We define an edge-labelled oriented graph $\downgraph$, the
\emph{marked strong order} graph, with vertex set $W$: for every instance (if any) of $i \leq 0 < j$ such that $y \, t_{i,j} = x$, and $\ell(x) = \ell(y)+1$, there is an edge from $x$ to $y$ labelled by $y(j)=x(i)$. Note that $\downgraph$ allows multiple edges between two vertices.
%there is an edge
%from $x$ to $y$ labelled by $y(j)=x(i)$ whenever $\ell(x) = \ell(y)+1$ and
%there exists $i \leq 0 < j$ such that $y \, t_{i,j} = x$.

%We define an edge-labelled oriented graph $\downgraph$, the
%\emph{marked strong order} graph, with vertex set $W$: there is an edge
%from $x$ to $y$ labelled by $y(j)=x(i)$ whenever $\ell(x) = \ell(y)+1$ and
%there exists $i \leq 0 < j$ such that $y \, t_{i,j} = x$.

\begin{Example} $(k=2)$
There are two edges from $x = s_0s_1s_2s_0$ to $y = s_1s_2s_0$ since $y^{-1} x$
can be written as $t_{i,j}$ with $i \leq 0 < j$ in two ways: $y^{-1} x =
t_{-4,1} = t_{-1,4}$. These edges are labelled by $y(1) = -2$ and $y(4) = 1$.
See Figure \ref{fig:downgraph}.
\end{Example}

\begin{Remark}
\cite{LLMS10} defined a similar graph except that they oriented their edges in
the opposite direction and labelled the edges by the pair $(i,j)$:
they write $y \buildrel{i,j}\over{\longrightarrow} x$ whereas we write $x
\buildrel{y(j)}\over{\longrightarrow} y$; and they call our label $y(j)$ the
\emph{marking} of the edge. See also Remark \ref{remark:otherresidues}.
\end{Remark}

\begin{figure}[htb]
\begin{center}
\begin{tikzpicture}[scale=0.60,>=latex,line join=bevel,]
  \node (u0*u1*u2) at (74.359bp,212bp) [draw,draw=none] {$s_{0}s_{1}s_{2}$};
  \node (u2*u1*u0) at (368.36bp,212bp) [draw,draw=none] {$s_{2}s_{1}s_{0}$};
  \node (u2*u1) at (418.36bp,144bp) [draw,draw=none] {$s_{2}s_{1}$};
  \node (u1*u2*u1*u0) at (238.36bp,280bp) [draw,draw=none] {$s_{1}s_{2}s_{1}s_{0}$};
  \node (u1*u2) at (104.36bp,144bp) [draw,draw=none] {$s_{1}s_{2}$};
  \node (u1*u0) at (327.36bp,144bp) [draw,draw=none] {$s_{1}s_{0}$};
  \node (u2) at (118.36bp,76bp) [draw,draw=none] {$s_{2}$};
  \node (u1) at (386.36bp,76bp) [draw,draw=none] {$s_{1}$};
  \node (u0*u1*u0) at (484.36bp,212bp) [draw,draw=none] {$s_{0}s_{1}s_{0}$};
  \node (u1*u2*u0) at (156.36bp,212bp) [draw,draw=none] {$s_{1}s_{2}s_{0}$};
  \node (u0) at (272.36bp,76bp) [draw,draw=none] {$s_{0}$};
  \node (1) at (272.36bp,7bp) [draw,draw=none] {$1$};
  \node (u0*u1*u2*u0) at (87.359bp,280bp) [draw,draw=none] {$s_{0}s_{1}s_{2}s_{0}$};
  \node (u2*u0) at (197.36bp,144bp) [draw,draw=none] {$s_{2}s_{0}$};
  \node (u0*u1) at (464.36bp,144bp) [draw,draw=none] {$s_{0}s_{1}$};
  \node (u0*u2) at (38.359bp,144bp) [draw,draw=none] {$s_{0}s_{2}$};
  \node (u0*u2*u1*u0) at (470.36bp,280bp) [draw,draw=none] {$s_{0}s_{2}s_{1}s_{0}$};
  \node (u1*u2*u1) at (238.36bp,212bp) [draw,draw=none] {$s_{1}s_{2}s_{1}$};
  \node (u0*u2*u1) at (571.36bp,212bp) [draw,draw=none] {$s_{0}s_{2}s_{1}$};
  \node (u0*u2*u0) at (19.359bp,212bp) [draw,draw=none] {$s_{0}s_{2}s_{0}$};
  \draw [black,->] (u2*u0) ..controls (177.61bp,127bp) and (150.43bp,103.6bp)  .. (u2);
  \definecolor{strokecol}{rgb}{0.0,0.0,0.0};
  \pgfsetstrokecolor{strokecol}
  \tikzstyle{every node}=[font=\footnotesize]
  \draw (175.36bp,110bp) node {$1$};
  \draw [black,->] (u1*u2*u0) ..controls (186.42bp,201.68bp) and (202.25bp,195.37bp)  .. (215.36bp,188bp) .. controls (228.38bp,180.68bp) and (228.95bp,174.58bp)  .. (242.36bp,168bp) .. controls (262.19bp,158.26bp) and (286.63bp,151.88bp)  .. (u1*u0);
  \draw (251.36bp,178bp) node {$0$};
  \draw [black,->] (u0*u1*u2*u0) ..controls (75.947bp,269.17bp) and (70.733bp,262.83bp)  .. (68.359bp,256bp) .. controls (65.223bp,246.99bp) and (66.522bp,236.41bp)  .. (u0*u1*u2);
  \draw (77.359bp,246bp) node {$2$};
  \draw [black,->] (u1*u0) ..controls (342.16bp,126.94bp) and (361.5bp,104.65bp)  .. (u1);
  \draw (372.36bp,110bp) node {$2$};
  \draw [black,->] (u0*u2*u1) ..controls (549.49bp,196.54bp) and (522.98bp,178.91bp)  .. (498.36bp,168bp) .. controls (488.13bp,163.47bp) and (461.18bp,155.68bp)  .. (u2*u1);
  \draw (543.36bp,178bp) node {$1$};
  \draw [black,->] (u2*u1*u0) ..controls (342.34bp,202.2bp) and (333.24bp,196.4bp)  .. (328.36bp,188bp) .. controls (323.55bp,179.73bp) and (323.21bp,169.03bp)  .. (u1*u0);
  \draw (337.36bp,178bp) node {$0$};
  \draw [black,->] (u2*u1*u0) ..controls (361.97bp,197.09bp) and (354.52bp,180.83bp)  .. (346.36bp,168bp) .. controls (344.18bp,164.57bp) and (341.6bp,161.05bp)  .. (u1*u0);
  \draw (366.36bp,178bp) node {$3$};
  \draw [black,->] (u1*u2*u0) ..controls (165.75bp,201.24bp) and (171.25bp,194.46bp)  .. (175.36bp,188bp) .. controls (181.16bp,178.88bp) and (186.6bp,168.03bp)  .. (u2*u0);
  \draw (198.86bp,178bp) node {$-1$};
  \draw [black,->] (u1*u2*u0) ..controls (150.66bp,196.79bp) and (146.32bp,179.85bp)  .. (153.36bp,168bp) .. controls (158.03bp,160.13bp) and (166.3bp,154.67bp)  .. (u2*u0);
  \draw (162.36bp,178bp) node {$2$};
  \draw [black,->] (u0*u1) ..controls (444.86bp,127bp) and (418.02bp,103.6bp)  .. (u1);
  \draw (443.36bp,110bp) node {$1$};
  \draw [black,->] (u0*u2*u0) ..controls (24.011bp,195.35bp) and (29.888bp,174.32bp)  .. (u0*u2);
  \draw (39.359bp,178bp) node {$0$};
  \draw [black,->] (u1*u2*u1*u0) ..controls (252.78bp,262.89bp) and (271.74bp,241.14bp)  .. (280.36bp,236bp) .. controls (298.67bp,225.09bp) and (322.05bp,219.02bp)  .. (u2*u1*u0);
  \draw (292.86bp,246bp) node {$-1$};
  \draw [black,->] (u1*u0) ..controls (318bp,132.89bp) and (312.29bp,126.06bp)  .. (307.36bp,120bp) .. controls (299.22bp,110bp) and (290.16bp,98.604bp)  .. (u0);
  \draw (316.36bp,110bp) node {$2$};
  \draw [black,->] (u0*u1*u2*u0) ..controls (179.19bp,274.39bp) and (374.92bp,261.94bp)  .. (387.36bp,256bp) .. controls (398.42bp,250.72bp) and (395.94bp,242.46bp)  .. (406.36bp,236bp) .. controls (421.53bp,226.6bp) and (440.65bp,220.64bp)  .. (u0*u1*u0);
  \draw (415.36bp,246bp) node {$1$};
  \draw [black,->] (u0*u1*u2*u0) ..controls (56.765bp,270.21bp) and (45.283bp,264.41bp)  .. (37.359bp,256bp) .. controls (30.034bp,248.23bp) and (25.515bp,237.07bp)  .. (u0*u2*u0);
  \draw (49.859bp,246bp) node {$-1$};
  \draw [black,->] (u0*u1*u2*u0) ..controls (42.039bp,272.73bp) and (10.711bp,265.75bp)  .. (3.3585bp,256bp) .. controls (-3.2495bp,247.24bp) and (1.5614bp,235.48bp)  .. (u0*u2*u0);
  \draw (12.359bp,246bp) node {$2$};
  \draw [black,->] (u0) ..controls (272.36bp,59.313bp) and (272.36bp,38.603bp)  .. (1);
  \draw (281.36bp,42bp) node {$1$};
  \draw [black,->] (u0*u2) ..controls (58.353bp,127bp) and (85.886bp,103.6bp)  .. (u2);
  \draw (96.359bp,110bp) node {$1$};
  \draw [black,->] (u0*u1*u2) ..controls (71.304bp,197.1bp) and (69.363bp,180.38bp)  .. (75.359bp,168bp) .. controls (77.5bp,163.58bp) and (80.883bp,159.69bp)  .. (u1*u2);
  \draw (84.359bp,178bp) node {$1$};
  \draw [black,->] (u1*u2*u0) ..controls (141.02bp,201.44bp) and (133.03bp,195.01bp)  .. (127.36bp,188bp) .. controls (120.42bp,179.43bp) and (114.67bp,168.43bp)  .. (u1*u2);
  \draw (136.36bp,178bp) node {$2$};
  \draw [black,->] (u2*u0) ..controls (216.1bp,127bp) and (241.92bp,103.6bp)  .. (u0);
  \draw (252.36bp,110bp) node {$0$};
  \draw [black,->] (u0*u1*u2*u0) ..controls (91.912bp,264.62bp) and (98.414bp,247.04bp)  .. (109.36bp,236bp) .. controls (114.86bp,230.45bp) and (121.91bp,225.94bp)  .. (u1*u2*u0);
  \draw (121.86bp,246bp) node {$-2$};
  \draw [black,->] (u0*u1*u2*u0) ..controls (115.08bp,269.95bp) and (126.36bp,264.07bp)  .. (134.36bp,256bp) .. controls (142.22bp,248.07bp) and (147.8bp,236.75bp)  .. (u1*u2*u0);
  \draw (156.36bp,246bp) node {$1$};
  \draw [black,->] (u0*u2*u1*u0) ..controls (506.45bp,270.58bp) and (522.15bp,264.63bp)  .. (534.36bp,256bp) .. controls (545.48bp,248.14bp) and (555.27bp,236.15bp)  .. (u0*u2*u1);
  \draw (563.36bp,246bp) node {$4$};
  \draw [black,->] (u0*u1*u0) ..controls (479.46bp,195.35bp) and (473.28bp,174.32bp)  .. (u0*u1);
  \draw (485.36bp,178bp) node {$2$};
  \draw [black,->] (u2*u1*u0) ..controls (380.83bp,195.04bp) and (396.98bp,173.07bp)  .. (u2*u1);
  \draw (407.36bp,178bp) node {$3$};
  \draw [black,->] (u1*u2*u1*u0) ..controls (238.36bp,263.45bp) and (238.36bp,242.73bp)  .. (u1*u2*u1);
  \draw (247.36bp,246bp) node {$3$};
  \draw [black,->] (u0*u2*u1*u0) ..controls (420.62bp,275.1bp) and (376.94bp,268.81bp)  .. (341.36bp,256bp) .. controls (324.14bp,249.8bp) and (322.91bp,241.21bp)  .. (305.36bp,236bp) .. controls (195.16bp,203.31bp) and (161.17bp,234.08bp)  .. (47.359bp,218bp) .. controls (47.257bp,217.99bp) and (47.155bp,217.97bp)  .. (u0*u2*u0);
  \draw (350.36bp,246bp) node {$4$};
  \draw [black,->] (u2*u1*u0) ..controls (336.51bp,201.85bp) and (317.49bp,195.2bp)  .. (301.36bp,188bp) .. controls (284.29bp,180.38bp) and (281.43bp,175.62bp)  .. (264.36bp,168bp) .. controls (249.94bp,161.56bp) and (233.21bp,155.57bp)  .. (u2*u0);
  \draw (310.36bp,178bp) node {$3$};
  \draw [black,->] (u0*u2*u1*u0) ..controls (458.79bp,264.46bp) and (444.28bp,246.77bp)  .. (428.36bp,236bp) .. controls (418.57bp,229.38bp) and (406.68bp,224.14bp)  .. (u2*u1*u0);
  \draw (459.36bp,246bp) node {$1$};
  \draw [black,->] (u0*u2*u1*u0) ..controls (420.37bp,274.59bp) and (378.99bp,267.91bp)  .. (369.36bp,256bp) .. controls (363.14bp,248.31bp) and (362.95bp,237.16bp)  .. (u2*u1*u0);
  \draw (378.36bp,246bp) node {$4$};
  \draw [black,->] (u1*u2*u1*u0) ..controls (217.86bp,263bp) and (189.64bp,239.6bp)  .. (u1*u2*u0);
  \draw (215.36bp,246bp) node {$3$};
  \draw [black,->] (u0*u2*u1*u0) ..controls (496.78bp,270.06bp) and (505.65bp,264.32bp)  .. (510.36bp,256bp) .. controls (514.74bp,248.26bp) and (513.92bp,244.14bp)  .. (510.36bp,236bp) .. controls (508.5bp,231.76bp) and (505.48bp,227.93bp)  .. (u0*u1*u0);
  \draw (521.36bp,246bp) node {$1$};
  \draw [black,->] (u0*u2*u1*u0) ..controls (473.77bp,263.45bp) and (478.03bp,242.73bp)  .. (u0*u1*u0);
  \draw (487.36bp,246bp) node {$4$};
\end{tikzpicture}
\end{center}
\caption{$\downgraph$ for $k=2$ truncated at the affine Grassmannian elements of
length $4$.}
\label{fig:downgraph}
\end{figure}

A \emph{strong strip} of length $i$ from $w$ to $v$, denoted by $w
\strongstrip v$, is a path 
\begin{align*}
            w
            \stackrel{\ell_1}\longrightarrow w_1
            \stackrel{\ell_2}\longrightarrow
            \cdots
            \stackrel{\ell_i}\longrightarrow w_i = v
\end{align*}
of length $i$ in $\downgraph$ with decreasing edge labels:
            $\ell_1 > \cdots > \ell_i$.
For non-negative integers $i$, define $D_i: \AA \to \AA$ as
\[
 D_i(\uu_w)
 = \sum_{
            w \strongstrip v
            \atop
            \size(w \strongstrip v) = i
        } \uu_{v}
\]
where the sum ranges over all strong strips of length $i$ that begin at
$w$. In particular, the coefficient of $\uu_{v}$ in $D_i(\uu_w)$ is the
number of strong strips of length $i$ that begin at $w$ and end at $v$.

\begin{Example}
With $k=2$, using the graph from Figure \ref{fig:downgraph}, one can verify that:
\begin{align*}
    D_1(\uu_0\uu_1\uu_2\uu_0) 
    &= 2\uu_0\uu_2\uu_0 + \uu_0\uu_1\uu_2 + 2\uu_1\uu_2\uu_0 + \uu_0\uu_1\uu_0;
    \\
    D_2(\uu_0\uu_1\uu_2\uu_0) 
    &= \uu_0\uu_2 + \uu_1\uu_2 + \uu_2\uu_0 + \uu_1\uu_0.
\end{align*}
\end{Example}

More generally, we define an operator $D_J$ for any composition $J$ of positive
integers; the operator $D_i$ defined above is $D_J$ for the composition $J=[i]$.
We need some additional notation. 
The \emph{ascent composition} of a sequence $\ell_1, \ell_2, \dots, \ell_m$ is
the composition
$[
i_1,
i_2 - i_{1},
\ldots,
i_j - i_{j-1},
m - i_j
]$,
where $i_1 < i_2 < \dots < i_j$ are the ascents of the sequence; that is, the elements in $\{1, \dots, m-1\}$ such
that $\ell_{i_a} < \ell_{i_a+1}$. For example, the ascent composition of the sequence $3,2,0,3,4,1$ is $[3,1,2]$ since the ascents are in positions $3$ and $4$.

If
$ w_0 \stackrel{\ell_1}\longrightarrow \cdots \stackrel{\ell_m}\longrightarrow w_m $
is a path in $\downgraph$, then we let 
$\ascomp(w_0 \stackrel{\ell_1}\longrightarrow \cdots \stackrel{\ell_m}\longrightarrow w_m)$
denote the ascent composition of the sequence of labels $\ell_1, \dots, \ell_m$.
It is a composition of the length of the path.

For a composition $J = [j_1, j_2, \dots, j_l]$ of positive integers, define
\[
 D_J(\uu_w)
 = \sum_{
         \ascomp\left(
            w 
            \stackrel{\ell_1}\longrightarrow w_1
            \stackrel{\ell_2}\longrightarrow
            \cdots
            \stackrel{\ell_m}\longrightarrow w_m
        \right)
        = J
        } \uu_{w_m}
\]
where the sum ranges over all paths in $\downgraph$ of length $m = j_1 + \dots
+ j_l$ beginning at $w$ whose sequence of labels has ascent composition $J$.

\begin{Example}
With $k=2$ one can verify using Figure \ref{fig:downgraph} that:
    \begin{align*}
    D_{[3]}(\uu_1\uu_2\uu_1\uu_0) &= \uu_2 + \uu_0, \\
    D_{[2,1]}(\uu_1\uu_2\uu_1\uu_0) & = \uu_2 + 2\uu_0 + \uu_1, \\
    D_{[1,2]}(\uu_1\uu_2\uu_1\uu_0) & = \uu_2 + 2\uu_0 + \uu_1, \\
    D_{[1,1,1]}(\uu_1\uu_2\uu_1\uu_0) & = \uu_0 + \uu_1.
    \end{align*}
\end{Example}

\section{ The details  }\label{sec:comb_desc}

A partition of the form $R = (c^{k+1-c})$, for some $c \in \{0, 1, \dots, k\}$,
is called a \emph{$k$-rectangle}. From now on, we let $\lambda$ be a partition
contained in a $k$-rectangle $R$. The goal of this section is to derive an
explicit combinatorial formula for $D_n(\uu_{w_\lambda})$. 

\subsection{ Goal of Section \ref{sec:comb_desc} }
Since $\lambda$ is contained inside a $k$-rectangle $R$, it is both a
$k$-bounded partition and a $(k+1)$-core, so we interchangeably think of
$\lambda$ as either. Under the bijection between $k$-bounded partitions and
$W^0$, the partitions contained in $R$ correspond to the elements of $W^0$ for
which any reduced expression does not contain an occurrence of the generator $s_c$.

If $(i,j)$ is a cell of $\lambda$, then its \emph{content} is $j-i$ and
its \emph{residue} is $(j-i) \mod (k+1)$. Since $\lambda$ is contained inside
$R=(c^{k+1-c})$, one can obtain $w_\lambda$ by reading the residues of the
cells in the diagram of $\lambda$ in the following order: start in the
bottom-right cell; read the residues in each row from right to left; and read
the rows from bottom to top. Explicitly, if $\lambda = (\lambda_1, \dots,
\lambda_l)$, then
\begin{align*}
    w_\lambda
    =
    \big(s_{\lambda_\ell-\ell} \, s_{\lambda_\ell-\ell-1} \cdots \, s_{1-\ell}\big)
    \cdots
    \big(s_{\lambda_i-i} \, s_{\lambda_i-i-1} \cdots \, s_{1-i}\big)
    \cdots 
    \big(s_{\lambda_1-1} \, s_{\lambda_1-2} \cdots \, s_{0}\big),
\end{align*}
where we write $s_a$, for any integer $a$, for the generator $s_{a \mod (k+1)}$
of $W$.

\begin{Definition}
    \label{d:wordfromdiagram}
For cells $x_1, \dots, x_n$ of a partition $\lambda$, let
$\lambda\setminus \{ x_1, \dots, x_n \}$ denote the diagram obtained by
removing $x_1, \dots, x_n$ from the diagram of $\lambda$.

For a diagram $\mathcal D$, let $w_{\mathcal D}$ denote the element of $W$ and
let $\uu_{\mathcal D}$ denote the element of $\AA$ obtained by reading the
residues of the cells in $\mathcal D$ in the order described above.
\end{Definition}

\begin{Example}\label{ex:331}
    Let $k = 5$, $\lambda = (3,3,1)$ and let $x$ be the cell $(2,2)$.
    Then $\lambda\setminus\{x\}$ is the diagram below (with each cell labelled
    by its residue).
    \begin{gather*}
    \tikztableausmall{{0,1,2},{5,\boldentry X,1},{4}}
    \begin{array}{ll}
        w_\lambda = s_4s_1s_0s_5s_2s_1s_0,
        & w_{\lambda \setminus \{x\}} = s_4s_1s_5s_2s_1s_0, \\
        \uu_\lambda = \uu_4\uu_1\uu_0\uu_5\uu_2\uu_1\uu_0,
        & \uu_{\lambda \setminus \{x\}} = \uu_4\uu_1\uu_5\uu_2\uu_1\uu_0.
    \end{array}
    \end{gather*}
\end{Example}

\begin{Remark}\label{remark:distinct}
    Since $\lambda$ is contained in $R=(c^{k+1-c})$, it follows immediately
    from the definition that the residues in each row and each column of
    $\lambda$ are distinct and do not include $c$. We will use this observation
    repeatedly in this section.
\end{Remark}

Recall the following theorem.

\begin{Theorem}\cite[Theorem 4.8]{BSS12}\label{thm:restriction}
    \label{sym-module-morphism}
    Suppose $w \in W$ and $v \in W_0$. Then $D_i(\uu_w \uu_v) = D_i(\uu_w) \uu_v$
    Consequently, $D_i$ is completely determined by its
    restriction to $\BB$.
\end{Theorem}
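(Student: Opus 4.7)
The plan is to reduce, by induction on $\ell(v)$, to the base case $v = s_j$ for some $j \in \{1,\dots,k\}$. Writing $v = v' s_j$ with $\ell(v) = \ell(v')+1$, we have $\uu_w \uu_v = (\uu_w \uu_{v'})\uu_{s_j}$; the inductive hypothesis gives $D_i(\uu_w \uu_{v'}) = D_i(\uu_w)\uu_{v'}$, and applying the base case to $\uu_w \uu_{v'}$ (which equals either $\uu_{wv'}$ or $0$) on the right by $\uu_{s_j}$ then finishes the step, using that both sides vanish in the degenerate case $\uu_w \uu_{v'} = 0$.

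For the base case, the engine is the following lemma: for any edge $x \to y$ of $\downgraph$ labelled $\ell$ and any $j \in \{1,\dots,k\}$, one has $\ell(xs_j) - \ell(x) = \ell(ys_j) - \ell(y) \in \{\pm 1\}$, and there is a corresponding edge $xs_j \to ys_j$ of $\downgraph$ carrying the same label $\ell$. To establish this I would start from
\[
    xs_j \;=\; y\,t_{i,j'}\,s_j
         \;=\; ys_j\cdot (s_j\, t_{i,j'}\, s_j)
         \;=\; ys_j \cdot t_{s_j(i),\, s_j(j')}.
\]
A quick residue check shows that $s_j$ with $j \in \{1,\dots,k\}$ stabilizes each of $\{m \leq 0\}$ and $\{m > 0\}$, so the sign condition $i \leq 0 < j'$ passes to the new pair $(s_j(i), s_j(j'))$; label preservation is the identity $(ys_j)(s_j(j')) = y(j')$. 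The equality of length changes comes from the lifting (``z-lemma'') property of Bruhat order applied to the cover $y \lessdot x$, once I rule out its ``swap'' case $ys_j = x$: such a case would force $t_{i,j'} = s_j$ with $i \leq 0 < j'$, which a short direct computation shows is impossible for $j \neq 0$ (the analogous statement for $s_0$ fails, as expected).

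With the lemma in hand, the base case splits on whether $\ell(ws_j) > \ell(w)$. In the length-increasing case, iterating the lemma lifts each strong strip $w = w_0 \to \cdots \to w_i$ to a path $ws_j = w_0 s_j \to \cdots \to w_i s_j$ with the same label sequence (still strictly decreasing, so again a strong strip), yielding a bijection between strong strips of length $i$ from $w$ and from $ws_j$; the identity $\ell(w_i s_j) = \ell(w_i)+1$ turns each $\uu_{w_i}\uu_{s_j}$ in $D_i(\uu_w)\uu_{s_j}$ into the matching $\uu_{w_i s_j}$ in $D_i(\uu_{ws_j})$. In the length-decreasing case, the lemma instead propagates $s_j$ as a right descent along every strip, so every term of $D_i(\uu_w)\uu_{s_j}$ vanishes; simultaneously $\uu_w \uu_{s_j} = 0$, so both sides are zero.

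The main obstacle is the lemma itself, and within it the exclusion of the swap case of the z-lemma. This single combinatorial fact is what isolates $j \neq 0$ as the good case and what allows right multiplication by $W_0$-generators to interact coherently with the strong covers of $\downgraph$; once it is in place, everything else is straightforward bookkeeping over strips.
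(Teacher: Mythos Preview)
Your central lemma is false, and the lifting property does not give the dichotomy you claim. The $z$-lemma for a cover $y \lessdot x$ and a simple reflection $s$ does \emph{not} say ``either $ys=x$ or $\ell(xs)-\ell(x)=\ell(ys)-\ell(y)$''; the case $\ell(xs)=\ell(x)+1$ while $\ell(ys)=\ell(y)-1$ (and its mirror) is perfectly possible even after ruling out $ys=x$. Here is a concrete counterexample in $\downgraph$ for $k=2$, read directly off Figure~\ref{fig:downgraph}: take $x=s_0s_1s_2s_0$, $y=s_0s_1s_2$, so $y^{-1}x=s_0=t_{0,1}$ with $0\le 0<1$, giving an edge $x\to y$ labelled $2$. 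For $j=2$ one has $ys_2=s_0s_1$ of length $2$, while $xs_2=s_0s_1s_2s_0s_2$ is reduced of length $5$ (as a window permutation $xs_2=[-3,5,4]$). Thus $\ell(xs_2)-\ell(ys_2)=3$, so there is no edge $xs_2\to ys_2$, and your bijection on strong strips breaks down at the very first step.

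Note also that the paper does not prove this theorem at all: it is quoted from \cite{BSS12}. The argument there is not the edge-by-edge lift you propose; the identity $D_i(\uu_w\uu_v)=D_i(\uu_w)\uu_v$ for $v\in W_0$ is obtained by relating $D_i$ to an operator that is manifestly right $W_0$-linear (ultimately via the pairing with $\Lambda^{(k)}$ and the adjointness with multiplication by a dual $k$-Schur function), rather than by tracking individual covers. Any direct bijective proof would have to account for the fact that, when $s_j$ is a right descent of some $y$ appearing in $D_1(\uu_w)$ but not of $w$ itself, the term $\uu_y\uu_{s_j}$ vanishes on the right-hand side while the corresponding path on the left must be cancelled against, or matched with, \emph{different} edges out of $ws_j$. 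Your argument provides no mechanism for this.
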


The goal of this section is to derive a combinatorial formula for
\begin{equation}
    \label{eq:Dn(wlambda)}
    D_n(\uu_\lambda) = \sum_{\size(w_\lambda \strongstrip w)=n} \uu_w
\end{equation}
that does not involve calculating strong strips.
By Theorem \ref{sym-module-morphism}, this would allow us to compute
$D_n(\uu_u)$ for any element $u \in W$ which factors as $w_\lambda v$ with
$\lambda\subseteq R$ and $v \in W_0$.

We begin by reformuating \eqref{eq:Dn(wlambda)}.
Recall that a strong strip $w_\lambda \strongstrip w$ of size $n$
is a path of length $n$ in $\downgraph$,
\begin{align*}
    w_\lambda
    \buildrel{\ell_1}\over\longrightarrow
    w_{1}
    \buildrel{\ell_2}\over\longrightarrow
    w_{2}
    \buildrel{\ell_3}\over\longrightarrow
    \cdots
    \buildrel{\ell_{n}}\over\longrightarrow
    w_{n} = w
\end{align*}
whose labels satisfy $\ell_1 > \ell_2 > \dots > \ell_n$.
Since each arrow comes from a strong cover, we obtain reduced expressions for
$w_1$, \dots, $w_n$ by starting with a reduced expression for $w_\lambda$ and
removing one letter at a time. It follows that there is a set
$\XX = \{x_1, \dots, x_n\}$ of cells of $\lambda$ for which
$w_a = w_{\lambda\setminus\{x_1,\dots,x_a\}}$ for $1 \leq a \leq n$.
Hence, we can write
\begin{align*}
    D_n(\uu_{\lambda})
    = \sum_{\XX} \uu_{\lambda\setminus\XX}
\end{align*}
where $\XX$ runs through some subset of the set of all collections of $n$ cells
from $\lambda$.

\begin{Theorem}
\label{thm:D_i}
If $\lambda$ is contained in $R=(c^{k+1-c})$, for some $c \in \{0,1,\dots,k\}$,
then
\begin{align*}
    D_n(\uu_{\lambda})
        = \sum_{\{x_1, \dots, x_n\}\in\mathfrak I}
          \uu_{{\lambda\setminus\{x_1, \dots, x_n\}}},
\end{align*}
where $\mathfrak I$ is the set of all collections of $n$ distinct cells
    $x_1 = (i_1,j_1)$, \dots, $x_n = (i_n,j_n)$
in $\lambda$ satisfying the following conditions:
\begin{description}
    \item[C1]
        if $a \neq b$, then $j_a \neq j_b$;
    \item[C2]
        if there exists $a$ and $b$ such that $i_a > i_b$ and $j_a < j_b$,
        then $(i_a, j_b) \notin \lambda$.
\end{description}
\end{Theorem}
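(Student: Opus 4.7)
The strategy is to establish a bijection between strong strips $w_\lambda \strongstrip v$ of size $n$ and the cell sets $\XX = \{x_1,\dots,x_n\}$ described by conditions \textbf{C1} and \textbf{C2}, sending such a strip to the set of cells removed along it. The first step is to show that each strong cover in a strip starting at $w_\lambda$ removes exactly one cell of the current diagram. Every sub-diagram $\mathcal D \subseteq \lambda \subseteq R$ has distinct residues in each row and column, none equal to $c$ (Remark \ref{remark:distinct}); from this I would deduce that the reading word $\uu_{\mathcal D}$ is always a reduced expression, so a Bruhat cover out of $w_{\mathcal D}$ corresponds precisely to deleting one letter of that reading word, that is, to removing one cell of $\mathcal D$. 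Consequently, a strong strip of size $n$ starting at $w_\lambda$ is an ordered sequence $(x_1, \ldots, x_n)$ of distinct cells of $\lambda$ together with a choice of edge label at each step.

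Next, I would identify the edge labels explicitly. For a cell $x=(i,j)$ being removed from $\mathcal D$, I would enumerate the reflections $t_{p,q}$ with $p \le 0 < q$ that realize this cover, and compute the label $w_{\mathcal D}(q)$ using the reading-word factorization for $w_{\mathcal D}$ described in Section \ref{sec:comb_desc}. The expected outcome is that the edge labels available for removing $x$ depend on $(i,j)$ and on the cells of $\mathcal D$ strictly east of $x$ in row $i$ and strictly south of $x$ in column $j$, and that the sequence of labels along a strong strip can strictly decrease only if the cells $x_1, \ldots, x_n$ are removed in strictly decreasing order of column index $j$.

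Finally, I would translate the strict-decrease condition into \textbf{C1} and \textbf{C2}. The decreasing-column requirement immediately forces the columns of the cells in $\XX$ to be pairwise distinct, giving \textbf{C1}. Given \textbf{C1}, the removal order is uniquely determined (decreasing by column), and I would verify that an admissible assignment of labels with $\ell_1 > \cdots > \ell_n$ exists iff \textbf{C2} holds: an SE-corner cell $(i_a, j_b) \in \lambda$ with $x_a$ southwest of $x_b$ is precisely what shifts the label at $x_a$ (evaluated after $x_b$ has been removed) upward, forcing an ascent against the label at $x_b$; its absence leaves enough separation for the labels to strictly decrease. The main obstacle is the edge-label computation in the second step: it requires carefully tracking the action of $w_{\mathcal D}$ on $\mathbb Z$ via its reading-word decomposition and accounting for every $(p,q)$ pair that realizes a given single-cell cover, as a single strong cover may carry several edges in $\downgraph$. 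Once the label formulas are pinned down, the translation to \textbf{C1} and \textbf{C2} should be a short combinatorial verification.
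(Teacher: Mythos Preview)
Your overall strategy---set up a bijection between strong $n$-strips from $w_\lambda$ and cell sets $\XX$ satisfying (C1)--(C2)---matches the paper's, but two of your intermediate claims are false and would block the argument.

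First, it is not true that the reading word of an arbitrary sub-diagram $\mathcal D\subseteq\lambda$ is reduced. Distinct residues in each row and column do not suffice: for $\lambda=(2,2)$ inside $R=(2^2)$ with $k=3$, removing the cells $(2,1)$ and $(1,2)$ leaves the diagram $\{(1,1),(2,2)\}$ with reading word $s_0 s_0$. In fact, reducedness of $w_{\lambda\setminus\XX}$ is essentially \emph{equivalent} to (C2); the paper proves one direction as Lemma~\ref{hero_lemma} and the other (failure of (C2) forces a non-reduced word) as Lemma~\ref{lemma:rectangle}. So you cannot use reducedness as a free input to get the cell-removal description of covers; instead, the paper first observes via the strong exchange property that along an actual strong strip each step deletes one letter of the current (automatically reduced) reading word, and only later characterizes which $\XX$ arise.

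Second, the expected conclusion of your label computation---that the cells must be removed in strictly decreasing column order---is wrong, so your derivation of (C1) from it collapses. For $\lambda=(3,3)\subseteq(3^2)$ with $k=4$ and $\XX=\{(1,1),(2,3)\}$ (which satisfies (C1) and (C2)), the unique strong strip removes $(1,1)$ first and $(2,3)$ second, i.e.\ in \emph{increasing} column order. The paper's label formula is not governed by the column of $x_a$ but by the content of the terminal cell of an \emph{East--South bounce path} from $x_a$ in $\lambda\setminus\XX_a$ (Proposition~\ref{prop:edgelabel}); the resulting order is ``rightmost cell in each row first, then leftwards within a row,'' interleaved across rows by these bounce-path contents. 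Establishing (C1) then requires a separate case analysis (Lemma~\ref{lemma:C1holds}) using both the bounce-path labels and the non-reducedness Lemma~\ref{lemma:rectangle}, not a one-line deduction from a column-monotone order. Once you replace your two claimed shortcuts with the bounce-path machinery and the reduced/non-reduced lemmas, the rest of your outline (Proposition~\ref{prop:C1andC2hold} for one direction, Proposition~\ref{prop:ordering} for the other) goes through.
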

The first condition (C1) says that no two cells among $x_1$, \dots, $x_n$
appear in the same column of $\lambda$; the second condition (C2) says that if
$x_a$ is to the southwest of $x_b$, then the rectangle delimited by $x_a$ and
$x_b$ is not contained in $\lambda$.

\begin{Example}
	Let $k = 4$ and suppose $\lambda = (2,2,1)$. Then $D_2(\uu_\lambda) = \uu_3\uu_4\uu_1 + \uu_0\uu_4\uu_0+\uu_4\uu_1\uu_0$ as shown in the picture below.
\[
\tikztableausmall{{X,1}, {4,X}, {3}} \hspace{.5in}
\tikztableausmall{{0,X}, {4,0}, {X}} \hspace{.5in}
\tikztableausmall{{0,1}, {4,X}, {X}}
\]
\end{Example}
The remainder of this section is devoted to proving this theorem.

\subsection{Labels of arrows and bounce paths}
Let $\XX = \{x_1, \dots, x_n\}$ be a collection of $n$ cells from $\lambda$,
and write $\XX_a = \{x_1, \dots, x_a\}$ for $a \leq n$.
We need to be able to compute the labels of arrows from
$w_{\lambda\setminus\XX_{a-1}}$ to $w_{\lambda\setminus\XX_{a}}$,
as well as find conditions on $\XX_a$ that guarantee the existence of
such an arrow.
Towards this end, consider the reduced expressions for
$w_{\lambda\setminus\XX_{a-1}}$
and
$w_{\lambda\setminus\XX_{a}}$
given by Definition \ref{d:wordfromdiagram}.
We can factor these words as
\begin{align}
    \label{eq:vudefn}
    & w_{\lambda\setminus\XX_{a-1}} = vs_{j_a-i_a}u,
    & w_{\lambda\setminus\XX_{a}} = vu,
\end{align}
where $s_{j_a-i_a}$ is the generator corresponding to the cell
$x_a = (i_a,j_a)$. Then
\begin{align}
    \label{eq:tij}
    w_{\lambda\setminus\XX_{a}}^{-1} w_{\lambda\setminus\XX_{a-1}} &= t_{u^{-1}(j_a-i_a), u^{-1}(j_a-i_a+1)}.
\end{align}
Hence, in order to have an arrow
we must have $u^{-1}(j_a-i_a) \leq 0 < u^{-1}(j_a-i_a+1)$.
The label of this arrow is
\begin{align*}
    w_{\lambda\setminus\XX_{a-1}}u^{-1}(j_a-i_a) &= vs_{j_a-i_a}(j_a-i_a) = v(j_a-i_a+1).
\end{align*}
We can compute $v(j_a-i_a+1)$ using a path in the diagram
$\lambda\setminus\XX_{a}$ that begins at $x_a$ and travels east and south,
turning whenever a cell in $\XX_{a}$ is encountered.

\begin{Definition}
    \label{def:ESpath}
    Let $\XX$ be a subset of the cells of $\lambda$.
    The \emph{East-South bounce path} of $x$ in the diagram
    $\lambda\setminus\XX$ is the path described by the
    following algorithm.
    Start at $x$ and
    travel East until you encounter a cell in $\XX$;
    then travel South until you encounter a cell in $\XX$;
    then travel East until you encounter a cell in $\XX$;
    and so on, until an East step puts you outside $\lambda$
    or a South steps hits the border cell of $\lambda$.
\end{Definition}

\begin{figure}[htb]
\begin{picture}(300,105)(0,50)
\put(90,100){
$\tikztableau[scale=0.35,every node/.style={font=\rm\scriptsize}]{
{,,,,,,,,,,},
{,$x$,,,,\boldentry X,,,},
{,,,,,,,},
{,\boldentry X,,,,\boldentry X,,},
{,,,,,,},
{$y$,,,\boldentry X,,,},
{,,,,,},
{,,,$-4$,},
}$
}
\put(117,130){\line(1,0){36}}
\put(153,130){\line(0,-1){20}}
\put(153,110){\vector(1,0){26}}
\put(180,107){\begin{small}$5$\end{small}}

\put(106,90){\line(1,0){28}}
\put(134,90){\vector(0,-1){17}}
\end{picture}
\caption{East-South bounce paths. In this case, the East-South bounce path of $x$ ends at a position of content $5$ and the East-South bounce path of $y$ ends at a position of content $-4$.}
\label{fig:sebouncepath}
\end{figure}
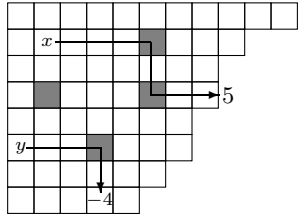

\begin{Lemma}
    \label{lemma:ESpath}
    Let $v$, $\XX_a$, $x_a = (i_a,j_a)$ be as above.
    Then $v(j_a-i_a+1)$ is the content of the last cell in the East-South
    bounce path of $x_a$ in $\lambda\setminus\XX_{a}$.
\end{Lemma}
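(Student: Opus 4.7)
The plan is to compute $v(j_a-i_a+1)$ directly by expanding $v$ as a product of simple reflections in the order induced by the reading order of $\lambda\setminus\XX_{a-1}$, and tracking the action of these reflections on the integer $m=j_a-i_a+1$. Since function composition is right-to-left and the rightmost letter of $v$ corresponds to the cell of $\lambda\setminus\XX_{a-1}$ read immediately before $x_a$, the reflections of $v$ act on this integer in the order: first the cells of $\lambda\setminus\XX_{a-1}$ in row $i_a$ lying east of $x_a$ (in increasing column order); then the cells of row $i_a+1$ (also in increasing column order); then row $i_a+2$; and so on. Denote by $p$ the running value, initialized at $p=m$.

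The main technical step is a row-by-row residue analysis. Because $\lambda\subseteq R=(c^{k+1-c})$, the contents $j-i$ of cells of $\lambda$ all lie in an interval of length strictly less than $k+1$, so wrap-around modulo $k+1$ cannot affect which cells act on the current $p$; combined with the fact that $s_r$ changes an integer $a$ only when $a\equiv r$ or $r+1\pmod{k+1}$, this yields a clean local rule for how $p$ evolves within a row $I$. Processing cells in increasing column order, the first potentially acting cell is at column $I+p-1$ (which would decrement $p$ by one), and the further potentially acting cells form a chain at columns $I+p,I+p+1,\ldots$ (each incrementing $p$ by one). One then checks: if $(I,I+p-1)\in\lambda\setminus\XX_{a-1}$, a single south step fires and, by direct inspection, no subsequent cell of row $I$ affects the new value of $p$; otherwise the increment cells chain, each acting on $p$ in turn, until halted by a cell of $\XX_{a-1}$ or by exit from $\lambda$. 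Row $I$ therefore contributes either one south step (at column $I+p-1$) or an east chain (from column $I+p$ onward), depending only on whether the cell directly below the virtual cursor is available.

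I will then match this local rule, applied row by row starting at row $i_a$, to the East-South bounce path of Definition~\ref{def:ESpath}. Row $i_a$ contributes only an east chain from $x_a$ (the would-be decrement cell is $x_a$ itself, whose generator is the middle letter $s_{j_a-i_a}$, not part of $v$), matching the initial east segment of the bounce path; when the chain ends at a cell of $\XX_{a-1}$, the rule in row $i_a+1$ either produces a south step at the bolded column (matching the bounce path's turn south) or, if the required cell is absent, starts a fresh east chain (matching the bounce path's bounce-east at a stacked bolded cell). An induction on the number of row transitions shows that the east and south segments of the bounce path are produced by the evolution of $p$ in lockstep. Termination falls into the two expected cases: either an east chain exits $\lambda$ in some row $I$ (leaving $p=\lambda_I-I+1$, the content of the outside cell just east of $(I,\lambda_I)$), or the south steps stop because the column $J$ has no further cells in $\lambda$ below some row $I^*$ (leaving $p=J-I^*$, the content of the border cell).

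The delicate step, and the one I anticipate being the main obstacle, is the final verification that once the bounce path terminates, no generator in $v$ from a row strictly below the terminating row can disturb $p$. This is precisely where the rectangle containment $\lambda\subseteq R$ is essential: the bound $\lambda_I\leq c\leq k$ forces, for every cell $(i,j)$ in such a lower row, the residue $j-i$ to fall outside $\{p-1,p\}\pmod{k+1}$, ruling out any spurious action via wrap-around. Once this is checked, the final value of $p$ coincides with the content of the last cell of the bounce path, which is the statement of the lemma.
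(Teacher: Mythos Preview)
Your proposal is correct and follows essentially the same approach as the paper: both decompose $v$ into row-words $\rho_i$ (for $i$ from $i_a$ down to the last row of $\lambda$), use the rectangle containment to rule out residue wrap-around so that each $\rho_i$ acts on the running value $p$ by a clean local rule, and then match increments and decrements of $p$ to East and South steps of the bounce path. The paper packages the row analysis into a single three-case identity for $\rho_i(-i+j+1)$, whereas you describe the same trichotomy in terms of an ``east chain'' versus a ``south step'' versus a fixed value, but the content is the same.
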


\begin{proof}
    Write
    $v = \rho_{\ell(\lambda)} \rho_{\ell(\lambda)-1} \cdots \rho_{i_a+1} \rho_{i_a}$,
    where $\rho_i$ is the subword of
    \begin{align*}
       s_{-i+\lambda_i}
    \, s_{-i+\lambda_i-1}
    \cdots
    \, s_{-i+j+2}
    \, s_{-i+j+1}
    \, s_{-i+j}
    \cdots 
    \,
    \, s_{-i+2}
    \, s_{-i+1}
    \end{align*}
    corresponding to the $i$-th row of $\lambda\setminus\XX_a$.
    Since $\lambda$ is contained in a $k$-rectangle, there cannot be
    more than one occurrence of a generator $s_l$ in $\rho_i$
    (see Remark \ref{remark:distinct}).
    The result will follow by interpreting the following identity:
    \begin{align*}
        \rho_i(-i+j+1)
        =
        \begin{cases}
            -i+j+1, & \text{if neither $s_{-i+j}$ nor $s_{-i+j+1}$ occurs in $\rho_i$,} \\
            -i+j, & \text{if $s_{-i+j}$ occurs in $\rho_i$,} \\
            -i+j'+1,& \text{if $s_{-i+j+1}$ occurs in $\rho_i$,
                        but $s_{-i+j}$ does not,} \\
        \end{cases}
    \end{align*}
    where $s_{-i+j'} s_{-i+j'+1} \cdots s_{-i+j-1} s_{-i+j}$ occurs in $\rho_i$
    but $s_{-i+j'-1}$ does not.

    Assume the path is at the cell $(i,j)$ and that $s_{-i+j}$ does not occur
    in $\rho_i$ (as is the case at the beginning of the path). Suppose that the
    cell $(i,j+1)$ immediately to the right of $(i,j)$ is not contained in
    $\XX_a$. Two things happen: the path takes $d$ East steps, where $d$ is the
    number of cells that separate $(i,j)$ and the first cell in $\XX_a$ that
    lies to its right (or the number of cells in the row if no such cell in
    $\XX_a$ exists); and applying $\rho_i$ to $-i+j+1$ will increase it by $d$.
    Note that in the situation where no such cell in $\XX_a$ exists, the $d$
    East steps puts the bounce path just outside $\lambda$.

    Suppose instead that the cell $(i,j+1)$ is contained in $\XX_a$. Then
    $-i+j+1$ is fixed by $\rho_i$. If the cell $(i+1,j+1)$ immediately below
    $(i,j+1)$ is not contained in $\XX_a$, then two things happen: the path
    takes one South step; and applying $\rho_{i+1}$ to $-i+j+1$ decrements it
    by $1$. This will continue until the cell below is not contained in $\XX_a$
    or it is not contained in $\lambda$. In the former situation, we are back
    to the previous case; in the latter situation we reach the end of the
    East-South bounce path.

    Therefore, East and South steps in the path correspond to incrementing and
    decrementing $-i+j+1$ by $1$. But content also increases by $1$ with every
    East step and decreases by $1$ with every South step. Since the path starts
    at $x_a = (i_a,j_a)$, which has content $-i_a+j_a$, the content of the last
    cell in the path is $vs_{-i_a+j_a}(-i_a+j_a)$.
\end{proof}

This allows us to compute the label of an arrow of the form
$w_{\lambda\setminus\XX_{a-1}} \to w_{\lambda\setminus\XX_{a}}$.

\begin{Proposition}
    \label{prop:edgelabel}
    Suppose there is an arrow in $\downgraph$ from
    $w_{\lambda\setminus\XX_{a-1}}$ to $w_{\lambda\setminus\XX_{a}}$.
    Then its label is the content of the last cell in the East-South bounce
    path of $x_a$ in $\lambda\setminus\XX_{a}$.
\end{Proposition}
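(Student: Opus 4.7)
The plan is to observe that Proposition \ref{prop:edgelabel} is essentially immediate once one combines the preliminary factorizations in \eqref{eq:vudefn}--\eqref{eq:tij} with Lemma \ref{lemma:ESpath}; the real content has already been placed in the lemma. So the proof I envisage is short and serves mainly to unpack the definition of the edge label in $\downgraph$ and match it to the quantity computed by the lemma.

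First I would recall the structure of an arrow in $\downgraph$: an arrow $x \to y$ is produced by a pair $i \leq 0 < j$ with $yt_{i,j} = x$ and $\ell(x) = \ell(y)+1$, and its label is defined as $y(j) = x(i)$. Next, I apply this to the hypothesized arrow from $w_{\lambda\setminus\XX_{a-1}}$ to $w_{\lambda\setminus\XX_a}$. Using the reduced factorizations $w_{\lambda\setminus\XX_{a-1}} = v s_{j_a - i_a} u$ and $w_{\lambda\setminus\XX_a} = vu$ from \eqref{eq:vudefn}, the identity \eqref{eq:tij} expresses the quotient as $t_{u^{-1}(j_a-i_a),\,u^{-1}(j_a-i_a+1)}$, and (as noted in the text immediately after \eqref{eq:tij}) the hypothesis that this is a strong cover in $\downgraph$ forces $u^{-1}(j_a-i_a) \leq 0 < u^{-1}(j_a-i_a+1)$. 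So this pair legitimately plays the role of $(i,j)$ in the definition of the edge label.

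Now I compute the label directly:
\begin{align*}
    w_{\lambda\setminus\XX_a}\bigl(u^{-1}(j_a - i_a + 1)\bigr)
    &= vu \cdot u^{-1}(j_a - i_a + 1) \\
    &= v(j_a - i_a + 1).
\end{align*}
(Equivalently, $w_{\lambda\setminus\XX_{a-1}}(u^{-1}(j_a-i_a)) = v s_{j_a-i_a}(j_a-i_a) = v(j_a - i_a + 1)$, confirming the two expressions for the label agree.) Finally, I invoke Lemma \ref{lemma:ESpath}, which identifies $v(j_a-i_a+1)$ with the content of the last cell in the East-South bounce path of $x_a$ in $\lambda \setminus \XX_a$; this gives the claimed description.

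There is essentially no obstacle here because the combinatorial work has been absorbed into Lemma \ref{lemma:ESpath}; the only thing to be careful about is bookkeeping the convention $y(j) = x(i)$ for the marking on an edge $x \to y$ and checking that the pair $(u^{-1}(j_a-i_a), u^{-1}(j_a-i_a+1))$ indeed has the sign pattern required by the definition of $\downgraph$. Both are immediate from the preceding paragraph, so the proof is a one-liner once these identifications are made.
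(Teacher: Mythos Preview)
Your proposal is correct and follows exactly the same approach as the paper's proof: identify the label as $v(j_a-i_a+1)$ via the factorization \eqref{eq:vudefn}--\eqref{eq:tij}, then invoke Lemma~\ref{lemma:ESpath}. The paper's version is simply terser, since the computation of the label as $v(j_a-i_a+1)$ was already carried out in the text immediately preceding the proposition.
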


\begin{proof}
    By definition, the label is $v(-i_a+j_a+1)$, where $v$ is defined in
    \eqref{eq:vudefn}. The result follows from Lemma \ref{lemma:ESpath}.
\end{proof}

In a similar manner, $u^{-1}(j_a-i_a)$ and $u^{-1}(j_a-i_a+1)$ are the contents
of the last cells in West-North and North-West bounce paths, respectively,
starting at $\XX_a$.

\begin{Proposition}
    \label{prop:NWpaths}
    If $\XX = \{x_1, \dots, x_n\}$ is a collection of cells of $\lambda$, then
    $$w_{\lambda\setminus\XX_{a-1}} = w_{\lambda\setminus\XX_{a}} t_{\alpha,\beta},$$
    where $\alpha$ and $\beta$ are the contents of the last cells in the
    West-North and North-West bounce paths in $\lambda\setminus\XX_{a}$
    starting at $x_a$, respectively.
    In particular, if $\XX = \{(i,j)\}$,
    $$w_{\lambda} = w_{\lambda\setminus\{(i,j)\}} t_{-i+1,j}.$$
\end{Proposition}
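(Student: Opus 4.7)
Equation \eqref{eq:tij} already gives
\[
    w_{\lambda\setminus\XX_a}^{-1}\,w_{\lambda\setminus\XX_{a-1}}
    = t_{u^{-1}(j_a-i_a),\,u^{-1}(j_a-i_a+1)},
\]
so multiplying by $w_{\lambda\setminus\XX_a}$ on the left yields $w_{\lambda\setminus\XX_{a-1}} = w_{\lambda\setminus\XX_{a}}\,t_{u^{-1}(j_a-i_a),\,u^{-1}(j_a-i_a+1)}$. The proposition thus reduces to two content identifications: $u^{-1}(j_a - i_a)$ equals the content of the last cell of the WN bounce path of $x_a$ in $\lambda\setminus\XX_a$, and $u^{-1}(j_a - i_a + 1)$ equals the content of the last cell of the NW path.

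My plan is to prove both identifications by directly adapting the proof of Lemma \ref{lemma:ESpath}. Decompose $u$ into row-factors $u = \tau_{i_a}\tau_{i_a-1}\cdots\tau_1$, where $\tau_{i_a}$ is the subword of row $i_a$ strictly to the left of $x_a$ in $\lambda\setminus\XX_{a-1}$ and, for $m < i_a$, $\tau_m$ is the full row-$m$ subword of $\lambda\setminus\XX_{a-1}$. Then $u^{-1}$ acts on an integer by applying the reversed factors $\tau_{i_a}^{-1},\tau_{i_a-1}^{-1},\ldots,\tau_1^{-1}$ in that order. By Remark \ref{remark:distinct}, the generators within each $\tau_m$ have pairwise distinct indices, so the three-case analysis in the proof of Lemma \ref{lemma:ESpath} applies verbatim to each reversed factor: applying $\tau_m^{-1}$ to the current value either fixes it, shifts it by one (corresponding to a step West or North in the bounce path), or jumps across the gap left by a missing generator (corresponding to encountering an $\XX_{a-1}$-cell and bouncing). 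Row by row, the value after each factor is the content of a cell on the bounce path, and the final value is the content of the last cell.

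For the ``in particular'' clause, take $\XX = \{(i,j)\}$ and $a = 1$, so that $\XX_{a-1} = \emptyset$ and $\XX_a \setminus \{x_a\} = \emptyset$. Neither bounce path encounters a further $\XX$-cell, so neither turns: the WN path travels straight West along row $i$ and terminates with content $-i + 1$, while the NW path travels straight North up column $j$ and terminates with content $j$. Therefore $t_{\alpha,\beta} = t_{-i+1,\,j}$, giving $w_{\lambda} = w_{\lambda\setminus\{(i,j)\}}\,t_{-i+1,\,j}$.

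The main obstacle is the row-by-row bookkeeping in the second paragraph. The translation between applying reversed row-factors of $u$ and moving along the bounce path follows Lemma \ref{lemma:ESpath} closely, but some care is required because the starting values $j_a - i_a$ and $j_a - i_a + 1$ are different from the ES starting value and because the WN and NW paths terminate under slightly asymmetric conventions depending on direction (West and South legs stop at a border cell inside $\lambda$, while East and North legs exit just outside). Otherwise, each step of the Lemma \ref{lemma:ESpath} argument has a mirror-image counterpart that closes out the proof.
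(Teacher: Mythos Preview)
Your approach is correct and is exactly the one the paper intends: the paper does not give an explicit proof of this proposition, but simply remarks (just before the statement) that ``in a similar manner, $u^{-1}(j_a-i_a)$ and $u^{-1}(j_a-i_a+1)$ are the contents of the last cells in West-North and North-West bounce paths''. Your outline---starting from \eqref{eq:tij}, decomposing $u$ into row factors, and tracking $u^{-1}$ on $j_a-i_a$ and $j_a-i_a+1$ via the same three-case analysis as in Lemma~\ref{lemma:ESpath}---is precisely the mirror-image argument the paper has in mind, and your derivation of the ``in particular'' clause is also correct.
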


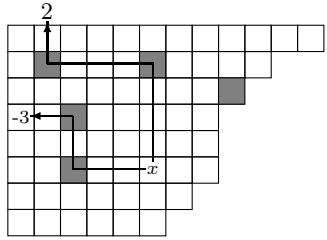
\begin{figure}[htb]
\begin{picture}(300,105)(0,50)
\put(90,100){
$\tikztableau[scale=0.35,every node/.style={font=\rm\scriptsize}]{
{,,,,,,,,,,,},
{,\boldentry X,,,,\boldentry X,,,,},
{,,,,,,,,\boldentry X},
{-3,,\boldentry X,,,,,},
{,,,,,,,},
{,,\boldentry X,,,$x$,,},
{,,,,,,},
{,,,,,},
}$
}
\put(155,93){\line(0,1){37}}
\put(155,130){\line(-1,0){40}}
\put(115,130){\vector(0,1){16}}
\put(112.5,147){\begin{small}$2$\end{small}}

\put(152,90){\line(-1,0){27.5}}
\put(124.5,90){\line(0,1){20}}
\put(124.5,110){\vector(-1,0){16}}
\end{picture}
\caption{North-West and West-North bounce paths. The North-West bounce path for $x$ ends in a cell of content $2$ and the West-North bounce path ends in a cell of content $-3$.}
\label{fig:nwbouncepath}
\end{figure}

\begin{Lemma}\label{hero_lemma}
    Let $\lambda$ be a Ferrers shape contained in a $k$-rectangle, and
    $\XX \subset \lambda$ a collection of cells satisfying (C2).
    Then $\ell(w_{\lambda\setminus\XX}) = \ell(w_{\lambda}) - |\XX|$;
    that is, the reading word $w_{\lambda\setminus\XX}$ of $\lambda\setminus\XX$
    is a reduced word in $W$.
\end{Lemma}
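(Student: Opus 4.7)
The plan is to prove the lemma by induction on $|\XX|$. The base case $|\XX|=0$ is immediate: $w_\lambda$ is the canonical reduced expression for the $0$-Grassmannian element attached to the $k$-bounded partition $\lambda$.

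For the inductive step, I would pick $x = (i,j) \in \XX$ to be the cell of $\XX$ read \emph{last} in the reading order of $\lambda$: explicitly, $i$ is the smallest row index appearing in $\XX$, and $j$ is the smallest column index among cells of $\XX$ in row $i$. Since (C2) is a condition on pairs of cells, it is inherited by the subset $\XX' := \XX \setminus \{x\}$, so the inductive hypothesis gives that $w_{\lambda\setminus\XX'}$ is reduced of length $|\lambda|-|\XX|+1$. By the choice of $x$, every cell of $\XX'$ is read strictly before $x$ in the reading order of $\lambda$, so the reading words factor as
\[
w_{\lambda\setminus\XX'} = A \cdot s_{c(x)} \cdot B
\qquad\text{and}\qquad
w_{\lambda\setminus\XX} = A \cdot B,
\]
where $B = w_\mu$ is the reading word of the sub-partition $\mu := (\lambda_1,\dots,\lambda_{i-1},j-1) \subseteq R$ (whose cells are read after $x$ in $\lambda$ and contain no cells of $\XX$), and $A$ is the reading word of the cells of $\lambda\setminus\XX'$ read before $x$. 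As a prefix and suffix of the reduced word $w_{\lambda\setminus\XX'}$, both $A$ and $B$ are themselves reduced.

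The task thus reduces to showing $\ell(AB) = |A|+|B| = |\lambda|-|\XX|$. Proposition~\ref{prop:NWpaths} supplies a transposition $t_{\alpha,\beta}$, read off from the West--North and North--West bounce paths of $x$ in $\lambda\setminus\XX$, for which $w_{\lambda\setminus\XX'} = w_{\lambda\setminus\XX}\cdot t_{\alpha,\beta}$. Multiplication by a reflection changes length by an odd integer, and since $AB$ has only $|\lambda|-|\XX|$ letters as a word, $\ell(w_{\lambda\setminus\XX}) \le |\lambda|-|\XX|$; combined with the inductive hypothesis, this forces $\ell(w_{\lambda\setminus\XX}) = |\lambda|-|\XX|-2m$ for some $m\ge 0$, so the goal becomes ruling out $m\ge 1$. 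I expect this last step to be the main obstacle. My plan would be to analyze the bounce paths at $x$ in $\lambda \setminus \XX$ together with hypothesis (C2): geometrically, (C2) forbids the cells of $\XX$ from forming the SW/NE corners of a rectangle lying inside $\lambda$, and this constraint should force the bounce paths to terminate at contents $\alpha\le 0<\beta$ for which $t_{\alpha,\beta}$ is a simple exchange --- that is, a strong cover in $\downgraph$ from $w_{\lambda\setminus\XX'}$ to $w_{\lambda\setminus\XX}$, with length drop exactly $1$. The distinctness of residues within each row and column of $\lambda$ recorded in Remark~\ref{remark:distinct} would play a supporting role, ruling out the spurious residue coincidences that could otherwise produce larger cancellations. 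Once $m=0$ is established, the induction concludes that $\ell(w_{\lambda\setminus\XX}) = |\lambda|-|\XX|$.
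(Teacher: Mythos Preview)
Your induction on $|\XX|$ and the factorisation $w_{\lambda\setminus\XX'} = A\,s_{c(x)}\,B$, $w_{\lambda\setminus\XX} = AB$ are set up correctly, and the parity observation is fine. The gap is in the final step, and it is real.

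First, the stated plan is circular. Knowing that $t_{\alpha,\beta}$ has $\alpha\le 0<\beta$ does \emph{not} imply that $w_{\lambda\setminus\XX'} \to w_{\lambda\setminus\XX}$ is a strong cover: the condition $\alpha\le 0<\beta$ is only one half of the definition of an edge in $\downgraph$; the other half is precisely the length drop $\ell(w_{\lambda\setminus\XX'})=\ell(w_{\lambda\setminus\XX})+1$ that you are trying to prove. There is no general mechanism by which the inequality on $\alpha,\beta$ alone forces the length drop to be exactly $1$ rather than $3,5,\dots$.

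Second, your specific choice of $x$ (last in the reading order) renders (C2) invisible at exactly the point where it must be used. Every cell of $\XX'$ lies in a row strictly below $x$ or strictly to its right in the same row; hence the West--North and North--West bounce paths from $x$ in $\lambda\setminus\XX$ encounter no cells of $\XX$ at all. They are straight, giving $\alpha=1-i\le 0$ and $\beta=j>0$ automatically, with no hypothesis on $\XX$ beyond $x\in\lambda$. Since the lemma is false without (C2) (see Lemma~\ref{lemma:rectangle}), any completion of the inductive step must use (C2) somewhere, and your set-up gives it no entry point.

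The paper sidesteps both issues by inducting on $|\lambda|$ rather than on $|\XX|$: it peels off the south-east corner cell $y$ of $\lambda$. When $y\notin\XX$, one must show that $s_m w$ is reduced, where $w=w_{\lambda'\setminus\XX}$ is reduced by induction and $m$ is the residue of $y$. For \emph{left} multiplication by a \emph{simple} reflection there is a clean criterion: $s_m w$ is reduced iff $w^{-1}(m)<w^{-1}(m+1)$. These two integers are the endpoints $\alpha,\beta$ of the West--North and North--West bounce paths from $y$, and now (C2) bites: since $y$ is a corner of $\lambda$, a cell of $\XX$ in the row of $y$ together with a cell of $\XX$ in the column of $y$ would violate (C2) (the delimiting rectangle has $y\in\lambda$ as its south-east corner). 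Hence at most one of the two bounce paths can bend, forcing $\alpha<\beta$. The moral is that the induction should be arranged so that the new letter is a simple generator appearing at one end of the word; your choice of $x$ buries the deleted letter in the middle and converts the question into one about covers under right multiplication by a non-simple reflection, where no comparably simple criterion is available.
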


\begin{proof}
Let $|\lambda|$ denote the size of $\lambda$. We proceed by induction on $|\lambda|$. The result is clear for $|\lambda|=0, 1$. Now, assume that for some $n$, the result is true for all shapes $\lambda$ for which $|\lambda| \le n-1$, and all $\XX \subset \lambda$ satisfying (C2).

Let $\lambda$ be such that $|\lambda|=n$, and let $\XX \subset \lambda$ satisfy (C2). We aim to show that $w_{\lambda\setminus\XX}$ is a reduced word. Let $y$ be the rightmost box on the bottom row of $\lambda$ and let $\lambda'=\lambda \setminus \{y\}$. 
Let $i$ and $j$ be the row and column corresponding to $y$, so its content is $j-i$, and let $m = j-i (\mod k+1)$ be the residue of $y$.

We consider two cases, depending on when $y$ is in $\XX$ or not.

\textbf{Case 1:} $y \in \XX$.  Since, $|\lambda'| < |\lambda|$, by induction we can assume that $w_{\lambda' \setminus (\XX\setminus\{ y\})}$ is a reduced word. 
However, the cells in $\lambda \setminus \XX$  are precisely the same as the cells in $\lambda' \setminus (\XX \setminus \{y\})$.
Therefore $w_{\lambda \setminus \XX} = w_{\lambda' \setminus \XX'}$, and the result follows.

\textbf{Case 2:} $y \notin \XX$. Let $w = w_{\lambda' \setminus \XX}$; once again, by induction, we may assume that $w$ is a reduced word. We note that $(\lambda' \setminus \XX) \cup \{y\} = \lambda \setminus \XX$, so $w_{\lambda \setminus \XX} = s_m w$. It is well known that $s_m w$ is reduced if and only if $w^{-1}(m) < w^{-1}(m+1)$ (see for instance \cite[Proposition 8.3.6]{BB05}).

By Proposition \ref{prop:NWpaths}, $w_{\lambda \setminus \XX} = w t_{\alpha,\beta}$, where $\alpha$ and $\beta$ are the contents of the last cells in the West--North and North--West bounce paths in $\lambda \setminus \XX$ starting at $y$. Therefore, $s_m w = w t_{\alpha, \beta}$, so $w^{-1} s_m w = t_{\alpha,\beta}$. Thus, $\alpha = w^{-1}(m)$ and $\beta = w^{-1}(m+1)$. Now, since $\XX$ satisfies (C2), $\XX$ does not contain a box in the row containing $y$ and a box in the column containing $y$. Thus, between the West-North and the North-West paths in $\lambda \setminus \XX$ starting at $y$, at most one of them is not a straight line. Therefore, the last cell of the West-North path must lie on a diagonal at the left of the last cell of the North-West path, so $\alpha < \beta$. (For an example that illustrates this property, see Figure \ref{fig:bouncepaths}.) Thus, $w^{-1}(m) < w^{-1}(m+1)$, so we conclude that $w_{\lambda \setminus \XX}$ is reduced.
\end{proof}

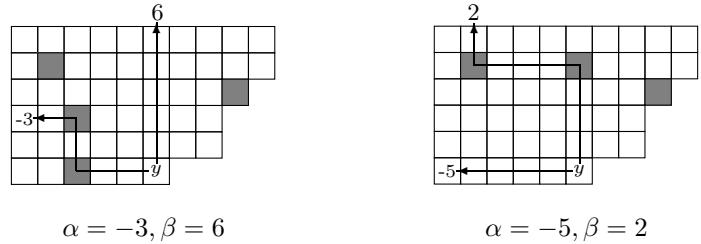
\begin{figure}[htb]
$
\begin{array}{cc}
\begin{picture}(150,65)(75,80)
\put(90,110){
$\tikztableau[scale=0.35,every node/.style={font=\rm\scriptsize}]{
{,,,,,,,,,},
{,\boldentry X,,,,,,,,},
{,,,,,,,,\boldentry X},
{-3,,\boldentry X,,,,,},
{,,,,,,,},
{,,\boldentry X,,,$y$},
}$
}
\put(155,93){\vector(0,1){53}}
\put(153,147){\begin{small}$6$\end{small}}

\put(152,90){\line(-1,0){27.5}}
\put(124.5,90){\line(0,1){20}}
\put(124.5,110){\vector(-1,0){16}}

%\put(152,90){\vector(-1,0){43.5}}
\end{picture}
&
\begin{picture}(150,80)(75,80)
\put(90,110){
$\tikztableau[scale=0.35,every node/.style={font=\rm\scriptsize}]{
{,,,,,,,,,},
{,\boldentry X,,,,\boldentry X,,,,},
{,,,,,,,,\boldentry X},
{,,,,,,,},
{,,,,,,,},
{-5,,,,,$y$},
}$
}
\put(155,93){\line(0,1){37}}
\put(155,130){\line(-1,0){40}}
\put(115,130){\vector(0,1){16}}
\put(112.5,147){\begin{small}$2$\end{small}}

\put(152,90){\vector(-1,0){43.5}}
\end{picture}\\
\alpha = -3, \beta=6 &
\alpha = -5, \beta=2 \\
\end{array}$
\caption{Examples of the two possibilities for the North-West and the West-North bounce paths starting at $y$ and the corresponding $\alpha, \beta$.}
\label{fig:bouncepaths}
\end{figure}

\subsection{Grassmannian factorizations}
Here we describe the factorization of $w_{\lambda\setminus\{x\}}$ as a product
$w^{(0)} w_{(0)}$ with $w^{(0)} \in W^0$ and $w_{(0)} \in W_0$. 

For a cell $x = (i,j) \in \lambda$, let $\mathcal H_x$ denote cells in the hook
of $x$; that is, the cells directly below $x$, the cells directly to the right
of $x$, and the cell $x$ itself. If $\lambda'$ denotes the conjugate of
$\lambda$, then $\mathcal H_x$ is
\begin{gather}
    \label{eq:hookcells}
    \{
    (\lambda'_j,j), (\lambda'_j-1,j), \ldots, (i+1,j),
    (i,\lambda_i), (i,\lambda_i-1), \ldots, (i,j+1),
    (i,j)
    \}.
\end{gather}
As illustrated in Figure \ref{fig:removehook}, the cells in
$\lambda\setminus\mathcal H_x$ that are below $\mathcal H_x$ can be shifted up one
cell and to the left one cell, resulting in a partition $\lambda_{x}$.
Moreover, this shift does not change the residues of these cells so that
$w_{\lambda_x} = w_{\lambda\setminus\mathcal H_x}$.

Let $\Gamma_{\lambda,x}$ denote the reduced expression obtained by reading from
right to left the residues in the first row that are above this hook, and the
residues in the first column from bottom to top that are to the left of the
hook, but not including those in the row or column of $x$. Explicitly: 
\begin{gather*}
    \Gamma_{\lambda,x} = 
    \big(s_{\lambda_i-1} s_{\lambda_i-2} \cdots s_{j+1} s_j\big)
    \big(s_{-\lambda'_j+1} s_{-\lambda'_j+2} \cdots s_{-i}\big)
\end{gather*}

\begin{figure}[htb]
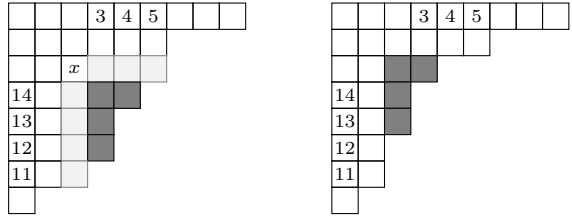

\begin{gather*}
\tikztableau[scale=0.35,every node/.style={font=\rm\scriptsize}]{
{,,,3,4,5,,,},
{,,,,,},
{,,$x$,X,X,X},
{14,,X,\boldentry X,\boldentry X},
{13,,X,\boldentry X},
{12,,X,\boldentry X},
{11,,X},
{\,},
}
\qquad
\tikztableau[scale=0.35,every node/.style={font=\rm\scriptsize}]{
{,,,3,4,5,,,},
{,,,,,},
{,,\boldentry X,\boldentry X},
{14,,\boldentry X},
{13,,\boldentry X},
{12,},
{11,},
{\,},
}
\end{gather*}
\caption{For $k=16$, $\lambda = (9,6,6,5,4,4,3,1)$ and $x = (3,3)$,
we have $\lambda_x = (9,6,4,3,3,2,2,1)$,
$\Gamma_{\lambda,x} = s_{5}s_{4}s_{3} \cdot s_{11}s_{12}s_{13}s_{14}$.}
\label{fig:removehook}
\end{figure}

\begin{Lemma}
    \label{lemma:split-1cell}
    If $x=(i,j)$ is a cell of $\lambda$, then
    \[w_{\lambda \setminus \{x\}} = w_{\lambda_x} \Gamma_{\lambda,x}.\]
\end{Lemma}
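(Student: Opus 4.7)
My plan is to prove the identity $w_{\lambda\setminus\{x\}} = w_{\lambda_x}\,\Gamma_{\lambda,x}$ as elements of the affine symmetric group $W$, with both sides then being reduced expressions of length $|\lambda|-1$. The length on the left is $|\lambda|-1$ by Lemma \ref{hero_lemma} applied to $\XX=\{x\}$; on the right,
$\ell(w_{\lambda_x})+\ell(\Gamma_{\lambda,x})=(|\lambda|-|\mathcal{H}_x|)+((\lambda_i-j)+(\lambda'_j-i))=|\lambda|-1$, since $|\mathcal{H}_x|=(\lambda_i-j)+(\lambda'_j-i)+1$. I also observe that $w_{\lambda_x}\in W^0$ (since $\lambda_x$ is a partition), while $\Gamma_{\lambda,x}\in W_0$; the latter follows from $\lambda\subseteq R=(c^{k+1-c})$, which forces the subscripts of the first factor of $\Gamma_{\lambda,x}$ to lie in $\{1,\ldots,c-1\}$ and those of the second factor to lie in $\{c+1,\ldots,k\}$ modulo $k+1$. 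Once the equality in $W$ is established, the right-hand side is therefore automatically the Grassmannian factorization of $w_{\lambda\setminus\{x\}}$.

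To prove the equality I decompose the reading word as $w_{\lambda\setminus\{x\}}=\rho_\ell\,\rho_{\ell-1}\cdots\rho_i^{\ast}\cdots\rho_1$, where $\rho_r=s_{\lambda_r-r}\,s_{\lambda_r-r-1}\cdots s_{1-r}$ reads row $r$ from right to left and $\rho_i^{\ast}$ is $\rho_i$ with the generator $s_{j-i}$ (corresponding to $x$) removed. Inside this product I locate the generators destined for $\Gamma_{\lambda,x}$: the contiguous subword $s_{\lambda_i-1}\cdots s_j$ sitting inside $\rho_1$ (the residues of the row-$1$ cells in columns $\lambda_i,\ldots,j+1$, lying directly above the arm of the hook), together with the final letter $s_{1-r}$ of each $\rho_r$ for $r=i+1,\ldots,\lambda'_j$ (the residues of the column-$1$ cells of rows $i+1,\ldots,\lambda'_j$, alongside the leg of the hook). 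I then commute the second-factor generators rightward past the row-$1$ subword so that $\Gamma_{\lambda,x}$ accumulates as the suffix of the word; this is legitimate because, as noted above, the two sets of subscripts lie in $[1,c-1]$ and $[c+1,k]$ modulo $k+1$, so they differ by at least $2$ and hence commute. Any other commutations are justified by Remark \ref{remark:distinct}, which guarantees that residues within each row and column are distinct and none equals $c$; the occasional braid moves are handled by the same distinctness.

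After this rearrangement, the part of the word to the left of $\Gamma_{\lambda,x}$ must be $w_{\lambda_x}$. This uses the fact that the ``up one, left one'' shift defining $\lambda_x$ preserves content (since $(c-1)-(r-1)=c-r$), and hence preserves residues, and that this shift sends the standard reading order of $\lambda\setminus\mathcal{H}_x$ to the standard reading order of $\lambda_x$. The principal obstacle is the bookkeeping in the commutation step: although the residue constraints give ample room in principle, executing the precise sequence of commutations and braid moves cleanly is best done by induction, for instance on the arm length $\lambda_i-j$. At each step one peels off the rightmost arm cell, applies the inductive hypothesis to the smaller configuration, and verifies that the remaining word still admits the desired decomposition; an analogous induction handles the leg length $\lambda'_j-i$.
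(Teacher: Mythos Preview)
Your approach is genuinely different from the paper's, and as written it has a real gap.

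The paper never manipulates reduced words directly. Instead it removes the cells of the hook $\mathcal H_x$ one at a time and uses Proposition~\ref{prop:NWpaths} at each step to write
\[
w_\lambda = w_{\lambda\setminus\mathcal H_x}\, t_{-i+1,j}\,
\Big(\prod_{m=j+1}^{\lambda_i} t_{-i+1,m}\Big)\Big(\prod_{r=i+1}^{\lambda'_j} t_{1-r,j}\Big).
\]
Conjugating each factor past $t_{-i+1,j}$ via $y\,t_{\alpha,\beta}=t_{y(\alpha),y(\beta)}\,y$ turns the two products into exactly the two factors of $\Gamma_{\lambda,x}$, and then one cancels $t_{-i+1,j}$ against the relation $w_\lambda=w_{\lambda\setminus\{x\}}t_{-i+1,j}$. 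No commutation or braid bookkeeping is needed.

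Your word-manipulation route, by contrast, breaks at the step you describe as routine. The first-factor subword $s_{\lambda_i-1}\cdots s_j$ inside $\rho_1$ is \emph{not} at the right end of $\rho_1$; to its right sits $s_{j-1}\cdots s_0$, and $s_j$ does not commute with $s_{j-1}$. Likewise, the last letter $s_{1-r}$ of $\rho_r$ must pass through $\rho_{r-1}$, whose last letter is $s_{2-r}$, adjacent to $s_{1-r}$. So neither family of generators can be ``commuted rightward'' as you claim; substantial braid rearrangement is required, and Remark~\ref{remark:distinct} does not supply it. A second issue is that after deleting your chosen cells $A=\{(1,m):j<m\le\lambda_i\}$ and $B=\{(r,1):i<r\le\lambda'_j\}$ from the reading word, the remaining diagram $\lambda\setminus(\{x\}\cup A\cup B)$ is \emph{not} $\lambda\setminus\mathcal H_x$; the content-preserving shift you invoke identifies $\lambda_x$ with $\lambda\setminus\mathcal H_x$, not with this other diagram. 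Hence the assertion that ``the part of the word to the left of $\Gamma_{\lambda,x}$ must be $w_{\lambda_x}$'' is exactly the lemma you are trying to prove. The induction you sketch at the end could in principle rescue the argument, but as stated it carries all the content and none of the detail.
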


\begin{proof}
    Let $\mathcal H_x$ be the set in \eqref{eq:hookcells}, with the ordering given as written.
    Repeated application of Proposition \ref{prop:NWpaths} gives
    \begin{align*}
        w_\lambda =
        w_{\lambda\setminus\mathcal H_x} t_{-i+1,j}
        \Big(t_{-i+1,j+1} \cdots t_{-i+1,\lambda_i-1} \, t_{-i+1,\lambda_i} \Big)
        \Big(t_{-i,j} \cdots t_{-\lambda'_j+2,j} \, t_{-\lambda'_j+1,j} \Big).
    \end{align*}
    Repeated application of the identity $yt_{\alpha,\beta} =
    t_{y(\alpha),y(\beta)}y$ with $y = t_{-i+1,j}$ yields
    \begin{align*}
        w_\lambda
        &= w_{\lambda\setminus\mathcal H_x}
        \Big(t_{j,j+1} \cdots t_{j,\lambda_i-1} \, t_{j,\lambda_i} \Big)
        \Big(t_{-i,-i+1} \cdots t_{-\lambda'_j+2,-i+1} \, t_{-\lambda'_j+1,-i+1} \Big)
        t_{-i+1,j}
        \\
        &= w_{\lambda\setminus\mathcal H_x}
        \Big(s_{\lambda_i-1} s_{\lambda_i-2} \cdots s_{j+1} s_j\Big)
        \Big(s_{-\lambda'_j+1} s_{-\lambda'_j+2} \cdots s_{-i-1} s_{-i}\Big)
        t_{-i+1,j}
        \\
        &= w_{\lambda\setminus\mathcal H_x}
        \, \Gamma_{\lambda,x} \,
        t_{-i+1,j},
    \end{align*}
    where the second equality follows from the fact that \[t_{a,b} = s_a s_{a+1} \cdots s_{b-2} s_{b-1} s_{b-2} \cdots s_{a+1} s_a = s_{b-1} s_{b-2} \cdots s_{a+1} s_{a} s_{a+1} \cdots s_{b-2} s_{b-1}.\]
    By Proposition \ref{prop:NWpaths}, we have
        $w_{\lambda\setminus\{x\}} = w_{\lambda\setminus\mathcal H_x} \Gamma_{\lambda,x}$.
    So it remains to show that
        $w_{\lambda\setminus\mathcal H_x} = w_{\lambda_{x}}$,
    but this was observed above.
\end{proof}

\subsection{From strong strips to subsets of cells}

As mentioned above, each strong strip starting at $\lambda$ corresponds to removing some cells of the diagram of $\lambda$. The goal of this sub-section is to show that the cells $\XX$ corresponding to a given strong strip must satisfy (C1) and (C2) of Theorem \ref{thm:D_i}.

We start with a simple lemma about cycles in the symmetric group, which will be used below.

\begin{Lemma}\label{le:reflections}
Let $a<b$ and let $c_{a,b}$ be the cycle $(a,a+1,\ldots,b) = s_a s_{a+1} \cdots s_{b-1}$. Then
$c_{a,b}^2$ is not reduced.
\end{Lemma}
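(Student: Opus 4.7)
The plan is to prove this by a direct length (inversion) count, comparing the length of $c_{a,b}^{2}$ as a permutation to the length $2(b-a)$ of the concatenated word $s_a s_{a+1}\cdots s_{b-1}\,s_a s_{a+1}\cdots s_{b-1}$.

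First I would recall that $c_{a,b}$ is the $(b-a+1)$-cycle sending $i \mapsto i+1$ for $a \leq i \leq b-1$ and $b \mapsto a$, while fixing every integer outside $[a,b]$. The word $s_a s_{a+1}\cdots s_{b-1}$ is a standard reduced expression, so $\ell(c_{a,b}) = b-a$; one can verify this quickly since the inversions of $c_{a,b}$ are exactly the pairs $(i,b)$ with $a \leq i \leq b-1$.

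Next I would compute $c_{a,b}^{2}$ explicitly: it sends $i \mapsto i+2$ for $a \leq i \leq b-2$, and sends $b-1 \mapsto a$, $b \mapsto a+1$. Listing the image sequence on the interval $[a,b]$ gives $(a+2, a+3, \ldots, b, a, a+1)$. Counting inversions is then straightforward: the restriction to positions $[a,b-2]$ is strictly increasing and contributes no inversions; each of the $b-a-1$ values in positions $[a,b-2]$ exceeds both $a$ (in position $b-1$) and $a+1$ (in position $b$); and the pair of final positions $(b-1,b)$ is not an inversion. This gives exactly $2(b-a-1)$ inversions, so $\ell(c_{a,b}^{2}) = 2(b-a) - 2$.

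Finally, since the proposed word $s_a s_{a+1}\cdots s_{b-1}\,s_a s_{a+1}\cdots s_{b-1}$ has letter-length $2(b-a)$, which strictly exceeds $\ell(c_{a,b}^{2})$, the word is not reduced. There is no real obstacle here; the only thing worth being careful about is handling the edge case $b = a+1$, where $c_{a,b} = s_a$ and $c_{a,b}^{2} = s_a^{2} = 1$, which has length $0 < 2$, so the conclusion still holds. An alternative approach would use the braid relation to rewrite $s_a s_{a+1}\cdots s_{b-1}\,s_a$ as $s_{a+1} s_a s_{a+1} s_{a+2}\cdots s_{b-1}$ and iterate to uncover an explicit cancellation, but the inversion count above is cleaner and avoids the bookkeeping.
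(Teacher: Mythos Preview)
Your argument is correct. You compute $\ell(c_{a,b}^{2})$ directly by listing the one-line form $(a+2,a+3,\ldots,b,a,a+1)$ on the interval $[a,b]$ and counting inversions, obtaining $2(b-a)-2$; since the word $s_a s_{a+1}\cdots s_{b-1}\,s_a s_{a+1}\cdots s_{b-1}$ has $2(b-a)$ letters, it cannot be reduced. The edge case $b=a+1$ is handled, and the (implicit) point that no inversions involve positions outside $[a,b]$ is immediate since $c_{a,b}^{2}$ fixes those positions and preserves the interval $[a,b]$ setwise.

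The paper takes a different route: it argues by induction on $b-a$, commuting the second $s_a$ past $s_{a+2},\ldots,s_{b-1}$ and applying the braid relation $s_a s_{a+1} s_a = s_{a+1} s_a s_{a+1}$ to rewrite $c_{a,b}^{2}$ as $s_{a+1}s_a\,c_{a+1,b}^{2}$, then invokes the inductive hypothesis. This is exactly the ``alternative approach'' you sketch at the end. Your inversion count is shorter and entirely elementary, but it leans on the specific combinatorial model of length in the symmetric group. The paper's argument uses only the Coxeter relations, and in fact produces an explicit sequence of braid and commutation moves exhibiting a repeated generator; this is convenient in the paper's context, where the lemma is invoked for words in the affine nilCoxeter algebra $\AA$ (so one is really interested in seeing $\uu_j^{2}=0$ appear). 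Of course, your conclusion transfers as well via the standard fact that any non-reduced word can be braid-moved to one containing a square, so there is no genuine loss.
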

\begin{proof}

The proof is by induction on $b-a$. Note that if $b-a = 1$, then $c_{a,b} = s_a$, and thus, $c_{a,b}^2$ is clearly not reduced. Now, for the induction step, assume that for $b-a = n$, the word is not reduced, and consider $a,b$ for which $b-a = n+1$. Thus,
\begin{eqnarray*}
c_{a,b}^2	& = & s_a s_{a+1} ááás_{b-1} s_a s_{a+1}\cdots s_{b-1} \\
		& = & s_a s_{a+1}s_a \cdots s_{b-1}s_{a+1}\cdots s_{b-1} \text{ since $s_a$ commutes with $s_i$ for $i \ge a+2$} \\
		& = & (s_{a+1} s_a s_{a+1}) \cdots s_{b-1}s_{a+1}\cdots s_{b-1} \\
		& = & s_{a+1} s_a c_{a+1,b}^2
\end{eqnarray*}
which by induction, is not reduced.
\end{proof}

Now, we let
$$
    w_\lambda
    \buildrel{\ell_1}\over\longrightarrow
    w_{\lambda \setminus \XX_1}
    \buildrel{\ell_2}\over\longrightarrow
    w_{\lambda \setminus \XX_2}
    \buildrel{\ell_3}\over\longrightarrow
    \cdots
    \buildrel{\ell_{n}}\over\longrightarrow
    w_{\lambda \setminus \XX_n}
$$
be a path with $\ell_1 > \cdots > \ell_n$.

We first prove that $\XX$ satisfies a weaker version of (C2), which we use to
prove that $\XX$ satisfies (C1). This weaker version of (C2) combined with (C1)
implies (C2).

\begin{Lemma}
    \label{lemma:rectangle}
    Let $\XX = \{x_1, \dots, x_n\} \subseteq \lambda$.
    Suppose there exists two cells $x_a = (i_a,j_a)$ and $x_b=(i_b,j_b)$ in $\XX$
    with $i_a > i_b$, $j_a < j_b$, and $(i_a, j_b) \in \lambda$.
    If no other cell in $\XX$ is contained in the rectangle delimited by $x_a$
    and $x_b$, then 
    $\ell(w_{\lambda\setminus\XX}) < \ell(w_\lambda) - n$.
    In this case, there cannot be a strong $n$-strip from $w_\lambda$ to
    $w_{\lambda\setminus\XX}$ in $\downgraph$.
\end{Lemma}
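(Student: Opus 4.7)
The plan is to show that the reading word $w_{\lambda\setminus\XX}$ is \emph{not reduced} as a word in the generators of $W$. Its word-length equals $|\lambda|-n=\ell(w_\lambda)-n$, and element-lengths in a Coxeter group share parity with word-lengths, so non-reducedness forces $\ell(w_{\lambda\setminus\XX})\le|\lambda|-n-2$, which is strictly less than $\ell(w_\lambda)-n$. The second claim is then immediate: every edge of $\downgraph$ is a length-$1$ strong cover, so a strong $n$-strip from $w_\lambda$ must land on an element of length exactly $\ell(w_\lambda)-n$, which the first claim rules out.

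For the non-reducedness, I plan to induct on $|\lambda|$, paralleling the proof of Lemma~\ref{hero_lemma} but \emph{detecting} failure of reducedness rather than confirming it. Let $y$ be the rightmost cell of the bottom row of $\lambda$. The hypothesis $(i_a,j_b)\in\lambda$ forces $\lambda_{i_a}\ge j_b>j_a$, so $y\ne x_a$, and $i_a>i_b$ forces $y\ne x_b$; moreover, the SE rectangle-corner $(i_a,j_b)$ lies inside the rectangle, so by hypothesis $(i_a,j_b)\notin\XX$.

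If $y\in\XX$, then in particular $y\ne(i_a,j_b)$, so $(x_a,x_b)$ still witnesses the hypotheses in $(\lambda\setminus\{y\},\XX\setminus\{y\})$ and induction concludes this case. If $y\notin\XX$ and $y\ne(i_a,j_b)$, then $(x_a,x_b)$ again witnesses the hypotheses in $(\lambda\setminus\{y\},\XX)$; by induction $\ell(w_{(\lambda\setminus\{y\})\setminus\XX})\le|\lambda|-n-3$, and since $w_{\lambda\setminus\XX}=s_m\,w_{(\lambda\setminus\{y\})\setminus\XX}$ with $m$ the residue of $y$, the inequality $|\ell(s_m w)-\ell(w)|\le 1$ combined with the required parity gives $\ell(w_{\lambda\setminus\XX})\le|\lambda|-n-2$, as desired.

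The main obstacle is the boundary case $y\notin\XX$ and $y=(i_a,j_b)$, where the rightmost cell of the bottom row of $\lambda$ coincides with the SE corner of the rectangle; here removing $y$ destroys the $(C2)$-violation in $\lambda\setminus\{y\}$, so the induction breaks. I plan to handle this case by a direct argument: apply Proposition~\ref{prop:NWpaths} to obtain the identity $w_{\lambda\setminus\XX}=w_\lambda\cdot\prod_{z\in\XX}t_z$ in $W$, with the distinguished reflections $t_{x_a}=t_{1-i_a,j_a}$ and $t_{x_b}=t_{1-i_b,j_b}$, and exploit the boundary condition $\lambda_{i_a}=j_b$ to perform a sequence of braid and commutation moves on the reading word that exposes a contiguous subword of the form $c_{A,B}^{2}$. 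Lemma~\ref{le:reflections} then yields non-reducedness of this subword, and the subword property of Coxeter groups propagates it to the full word. The main technical difficulty will be writing down the specific braid/commutation sequence in this boundary configuration; I expect it to emerge from the observation that in the boundary case, rows $i_a$ and $i_b$ of the rectangle (after $x_a$ and $x_b$ are respectively removed) contribute letter sequences of the same length $q-1=j_b-j_a$, and the interior rows of the rectangle can be pushed through by Coxeter moves to align these two sequences into the classical $c^{2}$ pattern.
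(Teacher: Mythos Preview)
Your overall strategy---show that the reading word of $\lambda\setminus\XX$ is not reduced, then invoke parity and the fact that each edge of $\downgraph$ drops length by exactly~$1$---is the same as the paper's, and your inductive reduction (peeling off the bottom-right cell $y$) is correctly set up: the case analysis $y\in\XX$, $y\notin\XX$ and $y\neq(i_a,j_b)$, $y=(i_a,j_b)$ is valid, and the first two cases go through exactly as you say.

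The issue is that the induction does no real work. When you land in the boundary case $y=(i_a,j_b)$, you still have an arbitrary $\lambda$ (with cells to the left of the rectangle in row $i_a$, cells on both sides of the rectangle in rows $i_b,\dots,i_a-1$, and arbitrary rows above $i_b$) and an arbitrary $\XX$ (with cells outside the rectangle). So the boundary case is essentially the full problem, and your ``direct argument'' for it is the entire content of the lemma. The paper skips the induction and goes straight to this direct argument. It isolates the consecutive subword of the reading word coming from the rectangle itself, writes it as $w_{\mathcal B}\,w_{\mathcal M}\,w_{\mathcal T}$ (bottom row minus $x_a$, middle rows, top row minus $x_b$), and then---this is the step you describe only as ``push the interior rows through''---applies Lemma~\ref{lemma:split-1cell} to the cell $x_a$ inside the shape $\mathcal M\cup\mathcal B$ to obtain the word identity $w_{\mathcal B}\,w_{\mathcal M}=w_{\mathcal M}\,w_{\mathcal T}$. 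This gives $w_{\mathcal R\setminus\{x_a,x_b\}}=w_{\mathcal M}\,w_{\mathcal T}^{\,2}$, and since $w_{\mathcal T}$ is an increasing product of at most $k$ consecutive generators, Lemma~\ref{le:reflections} finishes.

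Your proposed tool for the boundary case, the identity $w_{\lambda\setminus\XX}=w_\lambda\cdot\prod_{z}t_z$ from Proposition~\ref{prop:NWpaths}, is a red herring: it is an equality of \emph{group elements}, not of words, so it says nothing by itself about reducedness of the specific reading word. What you actually need is the word-level rewriting provided by Lemma~\ref{lemma:split-1cell}; once you invoke that, the ``push through'' becomes a one-line identity and no induction is required.
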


\begin{proof}
Let $\mathcal R$ be the rectangle delimited by $x_a$ and $x_b$. It is enough to
show that the expression for $w_{\mathcal R \setminus \{x_a, x_b\}}$
given by Definition \ref{d:wordfromdiagram} is not reduced.

As illustrated in the diagram below,
let $\mathcal T$ denote the cells in the first row of $\mathcal R$ not
including $x_b$, and let $w_{\mathcal T}$ be the correspond word.
Let $\mathcal B$ denote the cells in the last row of $\mathcal R$ not
including $x_a$, and let $w_{\mathcal B}$ be the correspond word.
Let $\mathcal M$ denote all but the first row and last row of $\mathcal R$.
\begin{align*}
    \begin{tikzpicture}[scale=0.45,every node/.style={font=\rm\small}]
    \tikztableauinternal{
    {,,,,,,,$x_b$},
    {,,,,,,,},
    {,,,,,,,},
    {,,,,,,,},
    {$x_a$,,,,,,,},
    }
    \draw [fill=lightgray!60,opacity=.90] (0,-1) rectangle +(7,1);
    \draw [fill=lightgray!30,opacity=.90] (0,-4) rectangle +(8,3);
    \draw [fill=lightgray!90,opacity=.90] (1,-5) rectangle +(7,1);
    \end{tikzpicture}
\end{align*}
Lemma \ref{lemma:split-1cell} applied to the cell $x_a$ inside the diagram
formed by $\mathcal M$ and $\mathcal B$ shows that
$w_{\mathcal B} w_{\mathcal M}$ factors as $w_{\mathcal M} \Gamma'$, where
$\Gamma'$ is the reading word of the diagram consisting of the cells in the
first row of $\mathcal M$ except for the first cell.
This diagram is a diagonal shift of $\mathcal T$, and so
$\Gamma' = w_{\mathcal T}$.
This shows that
$w_{\mathcal B} w_{\mathcal M} = w_{\mathcal M} w_{\mathcal T}$.
Then $w_{\mathcal R \setminus \{x_a, x_b \}}
= w_{\mathcal B} w_{\mathcal M} w_{\mathcal T}
= w_{\mathcal M} (w_{\mathcal T} w_{\mathcal T})$.
Note that $w_{\mathcal T} = s_i s_{i+1}\cdots s_{i+m}$ for some $i$ and some $m \leq k$.
Hence, this word is not reduced, since for any $i$ and any $m\leq k$, the word
$(s_i s_{i+1}\cdots s_{i+m})^2$ is not reduced by Lemma \ref{le:reflections}.

The last statement follows from the fact that a strong $n$-strip should decrease the length by exactly $n$.
\end{proof}

\begin{Lemma}
    \label{lemma:C1holds}
    If
    $
        w_\lambda
        \buildrel{\ell_1}\over\longrightarrow
        w_{\lambda \setminus \XX_1}
        \buildrel{\ell_2}\over\longrightarrow
        \cdots
        \buildrel{\ell_{n}}\over\longrightarrow
        w_{\lambda \setminus \XX_n}
    $
    is a path in $\downgraph$
    with $\ell_1 > \ell_2 > \dots > \ell_n$,
    then no two cells in $\XX_n$ are in the same column;
    that is, (C1) holds.
\end{Lemma}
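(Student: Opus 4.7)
The plan is to prove the contrapositive by contradiction. Suppose $\XX_n$ contains two cells in the same column $j$, and choose $b$ minimal so that $\XX_b$ already contains such a pair; set $x_a=(i_a,j), x_b=(i_b,j)\in\XX_b$ with $a<b$. By minimality, $\XX_{b-1}$ satisfies (C1), so $x_a$ is the unique cell of $\XX_{b-1}$ in column $j$. The argument splits into the case $i_a > i_b$ (lower cell first) and $i_a < i_b$ (upper cell first).

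When $i_a>i_b$, I compute the labels $\ell_a$ and $\ell_b$ via Proposition~\ref{prop:edgelabel}. The ES bounce paths of $x_a$ and $x_b$ start in the same column but different rows; in the generic case they both exit east without interference, giving $\ell_a=\lambda_{i_a}+1-i_a$ and $\ell_b=\lambda_{i_b}+1-i_b$. Since $i_b<i_a$ and $\lambda_{i_b}\ge \lambda_{i_a}$ (partition property), this forces $\ell_b>\ell_a$, contradicting the strict decrease $\ell_a>\ell_b$. When the paths are deflected south by other cells of $\XX$, an inductive extension using (C1) on $\XX_{a-1}$ and $\XX_{b-1}$ preserves this monotonicity.

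When $i_a<i_b$, I analyze the edge at step $b$ via Proposition~\ref{prop:NWpaths}: the cover is multiplication by $t_{\alpha_b,\beta_b}$, with $\alpha_b,\beta_b$ coming from the WN and NW bounce paths of $x_b$ in $\lambda\setminus\XX_b$. The NW path starts at $x_b$, moves north, hits $x_a$, and turns west along row $i_a$. If no cell of $\XX_{b-1}$ lies west of $x_a$ in row $i_a$, the path exits west with $\beta_b\le 0$, violating the edge requirement $\beta_b>0$, so no edge exists. Otherwise the path bounces off some $x_c=(i_a,c)\in\XX_{b-1}$ with $c<j$, and (C1) on $\XX_{b-1}$ forces the subsequent north segment from $x_c$ to be unobstructed, so the path exits north with $\beta_b=c>0$ and the edge exists. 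In this sub-case I compare the label $\ell_m$ of the step $m$ at which $x_c$ is removed to $\ell_b$: if $m<a$, the ES paths of $x_c$ and $x_a$ both cleanly exit east in row $i_a$ with label $\lambda_{i_a}+1-i_a$, forcing $\ell_m=\ell_a$ and contradicting $\ell_m>\ell_a$; if $a<m<b$, then the ES path of $x_c$ bounces off $x_a\in\XX_m$ and descends column $j$ to its border, giving $\ell_m=j-\lambda'_j$, and the inclusion $(i_b,j)\in\lambda$ yields $j+i_b\le\lambda_{i_b}+\lambda'_j$, hence $\ell_m<\lambda_{i_b}+1-i_b=\ell_b$, contradicting $\ell_m>\ell_b$.

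The main obstacle is extending these arguments when additional cells of $\XX_{b-1}$ cause further bouncing in the NW bounce path of $x_b$ or in the ES bounce paths of $x_a$, $x_b$, $x_c$. The (C1) condition on $\XX_{b-1}$ is the key ingredient that restricts these interactions---each column contains at most one cell of $\XX_{b-1}$---and it permits an inductive version of the core one-bounce analysis to propagate through multiple bounces and recover the same contradictions in full generality.
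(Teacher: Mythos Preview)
Your overall strategy---choosing $b$ minimal so that $\XX_{b-1}$ already satisfies (C1), then splitting on whether the earlier cell is above or below the later one---matches the paper. However, several of your case analyses are incomplete, and the missing ingredient in each is Lemma~\ref{lemma:rectangle}.

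In your Case~1 ($i_a>i_b$), the assertion that ``an inductive extension\dots preserves this monotonicity'' when bounce paths are deflected is not justified, and in fact the bare label comparison can fail. If the East--South path of $x_b$ is deflected south at some $x_d=(i_b,j_d)\in\XX_{b-1}$ with $j_d>j$, then $\ell_b=j_d-\lambda'_{j_d}$, and this is $>\ell_a$ precisely when $(i_a,j_d)\notin\lambda$. When $(i_a,j_d)\in\lambda$ the labels go the wrong way, and what actually fails is the existence of the edge: $x_a$ and $x_d$ would form a (C2)-violating pair in $\XX_{b-1}$, and Lemma~\ref{lemma:rectangle} forces $w_{\lambda\setminus\XX_b}$ to be non-reduced. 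The paper makes exactly this move; your write-up never invokes the rectangle lemma and so leaves this sub-case open.

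In your Case~2, the claim ``$\beta_b\le 0$, violating the edge requirement'' is too quick: one must rule out \emph{every} representation $t_{i,j}$ with $i\le 0<j$, which needs the precise range $\alpha,\beta\in\{c-k,\dots,0\}$ as the paper computes. More seriously, your sub-cases on $x_c$ assume the relevant East--South paths are either undeflected or deflected exactly once, but you have not excluded further cells of $\XX_{b-1}$ in row~$i_a$ to the right of $x_a$ or in row~$i_b$ to the right of $x_b$. The paper's Case~2 is organized differently: it first uses Lemma~\ref{lemma:rectangle} (and careful label bookkeeping) to show that \emph{no} cell of $\XX_{b-1}$ lies to the left of $x_a$ or $x_b$, and only then applies Proposition~\ref{prop:NWpaths} to conclude. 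Your final paragraph acknowledges this obstacle but does not resolve it; the ``inductive version of the core one-bounce analysis'' is a hope rather than an argument.
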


\begin{proof}
Pick $a$ so that $x_1, \dots, x_{a-1}$ are in different columns
and $x_a$ is in the same column as $x_b$ for some $b \in \{1, \dots, a-1\}$.

Suppose $x_a$ is above $x_b$. If there is no cell of $\XX_{a-1}$ directly
to the right of $x_a$, then by Proposition \ref{prop:edgelabel} the label $\ell_a$
is greater than $\ell_b$, a contradiction. So suppose instead that there is a
cell $x_d \in \XX_{a-1}$ directly to the right of $x_a$. If there is more than
one, let $x_d$ be the leftmost one. By assumption, no cell of $\XX_{a-1}$ lies below $x_d$, so Proposition \ref{prop:edgelabel} implies that the label
$\ell_a$ is the content of the last cell in the column containing $x_d$, and
$\ell_b$ is bounded above by $1$ plus the content of the last cell in the row
containing $x_b$. Since $\ell_a < \ell_b$, it follows that the rectangle with
vertices $x_b$ and $x_d$ is contained in $\lambda$, contradicting Lemma
\ref{lemma:rectangle}. Therefore, $x_a$ is not above $x_b$.

Suppose $x_a$ is below $x_b$. We argue first that there is no cell of
$\XX_{a-1}$ to the left of $x_a$ or $x_b$. If there were a cell $x_d \in
\XX_{a-1}$ to the left of $x_a$, then the rectangle with vertices $x_b$ and
$x_d$ would be contained in $\lambda$, contradicting Lemma
\ref{lemma:rectangle}.

Suppose there is a cell $x_d \in \XX_{a-1}$ to the left of $x_b$. Of all such
cells, pick $x_d$ so that there are no cells of $\XX_{a-1}$ between $x_d$ and
$x_b$. 

Assume first that $d > b$. Then $\ell_d$ is the content of the last
cell in the column containing $x_a$ and $x_b$. But then $\ell_d < \ell_a$,
contradicting that $\ell_a < \ell_d$.

Suppose instead that $d < b$. We have two subcases to consider.

\emph{Case 1:} there is no cell of $\XX_d$ to the right of $x_d$.
Let $x_f$ be the rightmost cell of $\XX_a$ that lies in the row containing
$x_d$. Note that $x_f \neq x_d$ since $x_b$ lies to the right of $x_d$.
The East-South bounce paths beginning at $x_d$ and $x_f$ travel East and they
do not turn. In particular, they are coterminus, implying that $\ell_f
= \ell_d$, contradicting $\ell_f \neq \ell_d$.

\emph{Case 2:} there is a cell of $\XX_d$ to the right of $x_d$.
The East-South bounce path starting at $x_d$ bounces South at some
cell $x_e \in \XX_d$. By assumption, there are no cells in $\XX_a$
below $x_e$, so $\ell_d$ is the content of the last cell of the column
containing $x_e$.
Let $f$ be the first index for which $x_f$ lies between $x_d$ and $x_e$ (in the
same row); such a cell exists since $x_b$ lies between $x_d$ and $x_e$.
The East-South bounce path starting at $x_f$ also bounces South at $x_e$ and,
as above, $\ell_f$ is the content of the last cell of the column containing
$x_e$. Thus, $\ell_f = \ell_d$, contradicting $\ell_f \neq \ell_d$.

Therefore, there is no cell of $\XX_{a-1}$ to the left of $x_a$ or $x_b$.
Since no cell of $\XX_{a-1}$ is to the left of $x_a$ or $x_b$, we can use
Proposition \ref{prop:NWpaths} to compute a pair $(\alpha,\beta)$ satisfying
$t_{\alpha,\beta} = w_{\lambda\setminus\XX_{a}}^{-1} w_{\lambda\setminus\XX_{a-1}}$, where $\alpha < \beta$,
$\alpha \in \{-1,-2,\dots,-k+c\}$,
and $\beta \in \{0,-1,-2,\dots,-k+c+1\}$.
%We have that $\alpha$ is the content of the first cell in the row containing
%$x_a$ and $\beta$ is $1$ plus the content of the first cell in the row
%containing $x_b$. Therefore, $\alpha$ and $\beta$ differ by less than $k+1$ and
%$(\alpha,\beta)$ is not congruent to $(0,1)$.
In particular, there is no such pair $(\alpha,\beta)$ with $\alpha \leq 0 <
\beta$. This contradicts the fact that there is an arrow in $\downgraph$ from
$w_{\lambda\setminus\XX_{a-1}}$ to $w_{\lambda\setminus\XX_{a}}$.
Therefore, $x_a$ is not below $x_b$.
\end{proof}

\begin{Proposition}
    \label{prop:C1andC2hold}
    If
    $
        w_\lambda
        \buildrel{\ell_1}\over\longrightarrow
        w_{\lambda \setminus \XX_1}
        \buildrel{\ell_2}\over\longrightarrow
        w_{\lambda \setminus \XX_2}
        \buildrel{\ell_3}\over\longrightarrow
        \cdots
        \buildrel{\ell_{n}}\over\longrightarrow
        w_{\lambda \setminus \XX_n}
    $
    is a path in $\downgraph$
    with $\ell_1 > \ell_2 > \dots > \ell_n$,
    then $\XX_n$ satisfies (C1) and (C2).
\end{Proposition}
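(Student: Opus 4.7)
The plan is to chain Lemma \ref{lemma:C1holds} with Lemma \ref{lemma:rectangle} via a minimal-violation argument. Property (C1) is immediate from Lemma \ref{lemma:C1holds}, so the only thing to establish is (C2). Throughout, I will use the fact that every arrow of $\downgraph$ decreases length by exactly one, so the hypothesis of an $n$-step path forces $\ell(w_{\lambda\setminus\XX_n}) = \ell(w_\lambda) - n$.

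Suppose toward contradiction that (C2) fails. Among all pairs $x_a, x_b \in \XX_n$ with $i_a > i_b$, $j_a < j_b$, and $(i_a, j_b) \in \lambda$, pick one minimizing the area of the rectangle $R = \{(i,j) : i_b \leq i \leq i_a,\ j_a \leq j \leq j_b\}$. The key step is to show that no third cell of $\XX_n$ sits inside $R$. Indeed, suppose $x_c \in \XX_n \setminus \{x_a, x_b\}$ lies in $R$. By (C1), $j_c \notin \{j_a, j_b\}$, so $j_a < j_c < j_b$. If $i_c > i_b$, then $(x_c, x_b)$ is a violating pair (the cell $(i_c, j_b)$ lies in $\lambda$ by the Ferrers property: $i_c \leq i_a$ together with $(i_a,j_b) \in \lambda$ forces column $j_b$ to reach row $i_c$) whose rectangle is strictly contained in $R$; if $i_c = i_b$, then $(x_a, x_c)$ is a violating pair, since $(i_a, j_c) \in \lambda$ follows from $j_c < j_b \leq \lambda_{i_a}$, and its rectangle is again strictly smaller than $R$. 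Either case contradicts minimality.

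Once no other cell of $\XX_n$ sits in $R$, Lemma \ref{lemma:rectangle} applies to the pair $(x_a, x_b)$ and yields $\ell(w_{\lambda\setminus\XX_n}) < \ell(w_\lambda) - n$, contradicting the length equality noted above. Hence (C2) must hold. The main obstacle, modest though it is, is the row-split case analysis in the middle paragraph ensuring that any intermediate cell of $\XX_n$ really does produce a strictly smaller violating pair; everything else is just assembling Lemmas \ref{lemma:C1holds} and \ref{lemma:rectangle}.
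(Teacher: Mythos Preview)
Your proof is correct and follows essentially the same approach as the paper: invoke Lemma~\ref{lemma:C1holds} for (C1), then for (C2) choose a violating pair whose rectangle is minimal and apply Lemma~\ref{lemma:rectangle} for a length contradiction. The paper's own proof is terser---it simply asserts that (C1) allows one to pick a violating pair whose rectangle contains no other cell of $\XX_n$---whereas you spell out the case split on $i_c$ that justifies this; your version is a fleshed-out form of the same argument.
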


\begin{proof}
    $\XX_n$ satisfies (C1) by Lemma \ref{lemma:C1holds}.
    Suppose $\XX_n$ contains a pair of cells $x_a$ and $x_b$ violating (C2).
    Since $\XX_n$ satisfies (C1), of all such pairs $x_a,x_b$, we choose one so
    that so that the smallest rectangle $\mathcal R$ containing $x_a$ and $x_b$
    does not contain any other cells from $\XX_n$. The result then follows from
    Lemma \ref{lemma:rectangle}.
\end{proof}

\subsection{From subsets of cells to strong strips}
This sub-section is devoted to proving that each collection of cells $\XX$ satisfying (C1) and (C2) defines a strong
strip from $w_\lambda$ to $w_{\lambda\setminus\XX}$.

\begin{Proposition}\label{prop:ordering}
    Suppose $\XX \subseteq \lambda$ satisfies (C1) and (C2).
    Then there exists a unique ordering of the cells $x_1, \dots, x_n$ of $\XX$
    so that there is a path in $\downgraph$
    \begin{align}
        \label{eq:npath}
        w_\lambda
        \buildrel{\ell_1}\over\longrightarrow
        w_{\lambda \setminus \{x_1\}}
        \buildrel{\ell_2}\over\longrightarrow
        \cdots
        \buildrel{\ell_{n}}\over\longrightarrow
        w_{\lambda \setminus \{x_1, \dots, x_{n}\}}
    \end{align}
    with $\ell_1 > \ell_2 > \cdots > \ell_{n}$.
\end{Proposition}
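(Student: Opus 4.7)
My plan is to describe the ordering explicitly and then verify correctness. For each $x \in \XX$, let $\gamma(x)$ denote the content of the last cell in the East-South bounce path of $x$ in the fully removed diagram $\lambda \setminus \XX$. I would show that the values $\gamma(x)$ are pairwise distinct and that the unique valid ordering is determined by $\gamma(x_1) > \gamma(x_2) > \cdots > \gamma(x_n)$, with the label at each step satisfying $\ell_a = \gamma(x_a)$.

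The argument splits into three claims. First, the $\gamma$-values are distinct: if $\gamma(x) = \gamma(y)$ for distinct $x, y \in \XX$, the two East-South bounce paths in $\lambda \setminus \XX$ terminate at the same cell, and tracing backward to their point of divergence produces either two cells of $\XX$ in a common column (contradicting (C1)) or a pair of cells of $\XX$ enclosing a rectangle entirely contained in $\lambda$ (contradicting (C2)). Second, with the ordering above, the East-South bounce path of $x_a$ in the intermediate diagram $\lambda \setminus \XX_a$ has the same endpoint as in $\lambda \setminus \XX$. The key geometric observation here is that every cell $y \in \XX$ at which $x_a$'s path in $\lambda \setminus \XX$ turns must satisfy $\gamma(y) > \gamma(x_a)$, so $y \in \XX_a$ and the path turns identically in both diagrams. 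I would prove this by examining $y$'s own East-South bounce path, which starts adjacent to $x_a$'s path and, by (C1) and (C2), can be shown to reach a cell of content strictly larger than $\gamma(x_a)$. Combined with Proposition \ref{prop:edgelabel}, this identifies $\ell_a = \gamma(x_a)$, so the labels strictly decrease by the choice of ordering. Third, the existence of each arrow in $\downgraph$ follows from Lemma \ref{hero_lemma} (which gives reducedness of $w_{\lambda \setminus \XX_a}$, so consecutive lengths differ by one) together with the West-North/North-West bounce path analysis used in the proof of Lemma \ref{hero_lemma}, applied to $x_a$ in $\lambda \setminus \XX_a$, to confirm that the connecting transposition $t_{\alpha,\beta}$ has $\alpha \leq 0 < \beta$.

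For uniqueness, I would observe that in any candidate ordering $y_1, \ldots, y_n$ producing a strictly decreasing strong strip, the label of the final arrow is $\gamma(y_n)$ (since $\XX_n = \XX$ irrespective of the ordering), and this must be strictly smaller than each preceding label. Since the preceding labels are themselves $\gamma$-values by the compatibility argument above, $\gamma(y_n)$ must be the minimum of $\{\gamma(x) : x \in \XX\}$, and distinctness forces $y_n = x_n$. Applying this argument inductively to $\XX \setminus \{x_n\}$ pins down the entire ordering. The main obstacle I anticipate is the second claim above — showing that every cell at which $x_a$'s path turns in $\lambda \setminus \XX$ has strictly larger $\gamma$-value — which is where the combinatorial conditions (C1) and (C2) do the real work of matching the geometry of bounce paths with the decreasing-label condition on strong strips.
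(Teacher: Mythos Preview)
Your approach is essentially the same as the paper's, but framed slightly differently. You define $\gamma(x)$ via the bounce path in the fully removed diagram $\lambda\setminus\XX$ and then argue a compatibility claim, whereas the paper works directly with the bounce path in the intermediate diagram $\lambda\setminus\XX_a$. These coincide, and the reason is simpler than you make it sound: by (C1), no two cells of $\XX$ share a column, so every East--South bounce path in $\lambda\setminus\XX$ (or in any $\lambda\setminus\XX_a$) turns at most once --- it goes East, possibly turns South at one cell of $\XX$ in the same row, and never turns again. The paper leverages this immediately (``the East--South bounce paths depend only on the order of the cells of $\XX$ in each row''), then just writes down the resulting contents row by row and checks they are distinct. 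Your second claim is correct but overbuilt; the anticipated ``main obstacle'' dissolves once you notice the single-turn structure.

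There is one genuine soft spot in your uniqueness argument. You say that in any candidate ordering $y_1,\dots,y_n$ the preceding labels $\ell_1,\dots,\ell_{n-1}$ are ``themselves $\gamma$-values by the compatibility argument above,'' but that compatibility was proved only for the specific $\gamma$-decreasing ordering, not for an arbitrary one. In a different ordering the bounce path of $y_a$ in $\lambda\setminus\{y_1,\dots,y_a\}$ may not see the same turning cell as in $\lambda\setminus\XX$, so $\ell_a$ need not equal $\gamma(y_a)$. The fix is easy: use the single-turn observation to show that for any ordering the label when removing a cell is either $\lambda_i-i+1$ (if no already-removed cell lies to its right in the row) or $j_y-\lambda'_{j_y}$ for the first already-removed cell $y$ to its right; then a direct check shows that any within-row order other than right-to-left produces two equal labels, violating strict decrease. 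That is exactly the paper's uniqueness argument, and it avoids the circularity.
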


%\begin{Lemma}
%    Suppose no two cells in $\XX \subseteq \lambda$ are in the same column.
%    Then there is a unique way to order the cells $x_1, \dots, x_n$ in $\XX$ so
%    that the contents $\mathfrak c_a$ of the last cell in the East-South bounce
%    path in $\lambda\setminus\XX_a$ starting at $x_a$ satisfy $\mathfrak c_1 >
%    \cdots > \mathfrak c_n$.
%\end{Lemma}

\begin{proof} Suppose we have an ordering of the cells in $\XX = \{ x_1, x_2, \dots, x_n\}$.
    Let $\mathfrak c_a$ denote the content of the last cell in the East-South bounce path in
    $\lambda\setminus\XX_a$ which starts at $x_a$. We claim that the ordering of the $x_i$ can be chosen so that $\mathfrak c_1 > \cdots >
    \mathfrak c_n$. By Proposition \ref{prop:edgelabel}, these contents will be
    the labels in \eqref{eq:npath}.

    Since no two cells of $\XX$ are in the same column, the East-South bounce
    paths depend only on the order of the cells of $\XX$ in each row.
    Let $x_{a_1}$, $x_{a_2}$, \dots, $x_{a_{t-1}}$, $x_{a_t}$, be the cells of
    $\XX$ in the $i$-th row of $\lambda$ listed from right to left.
   The contents of the last cells in the East-South bounce paths starting at these cells
    are, respectively,
    \begin{gather*}
        \lambda_{i_{a_1}} - i_{a_1} + 1 >
        j_{a_1} - \lambda'_{j_{a_1}} >
        \dots >
        j_{a_{t-1}} - \lambda'_{j_{a_{t-1}}}.
    \end{gather*}
    Note that these are all distinct. Moreover, none of the above contents are equal to the
    content coming from another East-South bounce path. To see this, first note that the cells in $\lambda$ which reside at the end of an East-South bounce path all have distinct contents. Then it is enough to assume that another
    bounce path ends at $(\lambda'_{j_{a_r}},j_{a_r})$, in which case it must originate
    in a preceding row and turn South in the $j_{a_r}$-th column. This could
    only happen if there is another cell in this column, contradicting (C1).
    Order the cells in $\XX$ according to these values, in decreasing order.
    Note that any other ordering of these cells will result in a sequence of
    contents $\mathfrak c_1, \dots, \mathfrak c_n$ that is not decreasing.
    This proves the uniqueness of the path.

    It remains to show that this ordering defines a path in $\downgraph$; that
    is, we need to show that there is an arrow from
    $w_{\lambda\setminus\XX_{a-1}}$ to $w_{\lambda\setminus\XX_{a}}$.
    With respect to the above order, no cell of $\XX_{a-1}$ is directly to the left of
    or directly above $x_a$. Hence,
    $w_{\lambda\setminus\XX_{a}} = w_{\lambda\setminus\XX_{a-1}} t_{-i_a+1,j_a}$
    by Proposition \ref{prop:NWpaths},
    and so $-i_a+1 \leq 0 < j_a$.
    %
    %This arrow, if it exists, is unique. Indeed, suppose
    %$w_{\lambda\setminus\XX_{a}} = w_{\lambda\setminus\XX_{a-1}}
    %t_{\alpha',\beta'}$ with $\alpha' \leq 0 < \beta'$. Then the pair
    %$(\alpha',\beta')$ is equivalent to $(-i_a+1, j_a)$ modulo $k+1$. But $j_a
    %\leq c$ and $i_a \leq k+1-c$ so that $j_a-(-i_a+1) \leq k$, implying
    %$(\alpha', \beta') = (-i+1,j)$.
    %
    It remains to show that
    $\ell(w_{\lambda\setminus\XX_{a}}) = \ell(w_{\lambda\setminus\XX_{a-1}})-1$. However, since each $\XX_a$ satisfies (C1) and (C2),  Lemma \ref{hero_lemma} implies that $\ell(w_{\lambda \setminus \XX_a}) = \ell(w_\lambda) - a$ for all $a$, which implies that $\ell(w_{\lambda\setminus\XX_{a}}) = \ell(w_{\lambda\setminus\XX_{a-1}})-1$.
    \end{proof}
        
    %It suffices to show that
    %$\ell(w_{\lambda\setminus\XX_{a}}) = \ell(w_{\lambda})-a$.
\begin{comment}
    By Proposition \ref{prop:split}, we have the factorization
    \begin{gather*}
        w_{\lambda\setminus\XX_a} = w_{\lambda_{\XX_a}}
        \Gamma_{\lambda, \XX_a},
    \end{gather*}
    into a Grassmannian term and a term from $W_0$.  The Grassmannian term,  $w_{\lambda_{\XX_a}} \in W^0$, is reduced since it corresponds directly to reading the residues from the shape $\lambda_{\XX_a}$, so it suffices to prove that the expression
    $\Gamma_{\lambda, \XX_a} \in W_0$ is reduced.
    \end{comment}

By Proposition \ref{prop:ordering}, every collection of cells $\XX$ satisfying (C1) and (C2) defines a strong strip and by Proposition \ref{prop:C1andC2hold}, each strong strip corresponds to such an $\XX$. This proves Theorem \ref{thm:D_i}. 

\subsection{Combinatorial formula for $D_{1^n}$}

Adapting the proof of Theorem \ref{thm:D_i} by working with columns instead of
rows and decreasing labels instead by increasing labels, we obtain a
combinatorial formula for the expansion of $D_{1^n}(\uu_\lambda)$.

\begin{Theorem}
\label{thm:D_1^n}
If $\lambda$ is contained in $R=(c^{k+1-c})$, for some $c \in \{0,1,\dots,k\}$,
then
\begin{align*}
    D_{1^n}(\uu_{\lambda})
        = \sum_{\{x_1, \dots, x_n\}}
          \uu_{{\lambda\setminus\{x_1, \dots, x_n\}}},
\end{align*}
where the sum ranges over all sets of $n$ cells from $\lambda$ that satisfy
(C2) and that do not contain two cells in the same row.
\end{Theorem}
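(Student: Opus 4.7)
The plan is to transpose the proof of Theorem \ref{thm:D_i}, swapping rows for columns and strictly decreasing labels for strictly increasing ones. By definition, a path in $\downgraph$ with ascent composition $[1^n]$ is a path of length $n$ whose labels satisfy $\ell_1 < \ell_2 < \cdots < \ell_n$. As in the reformulation preceding Theorem \ref{thm:D_i}, each such path out of $w_\lambda$ corresponds to removing cells $x_1, \dots, x_n$ of $\lambda$ one at a time, so what must be characterized are the ordered tuples $(x_1,\dots,x_n)$ that arise.

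Nothing in Propositions \ref{prop:edgelabel} and \ref{prop:NWpaths} or Lemma \ref{hero_lemma} depends on the direction of the labels, so these apply verbatim: the label of each arrow is the content of the last cell of an East-South bounce path, and removal of a cell corresponds to right-multiplication by a transposition described by West-North and North-West bounce paths. Likewise Lemma \ref{lemma:rectangle} is label-direction-agnostic: its conclusion that $w_{\lambda\setminus\XX}$ is not reduced depends only on two cells of $\XX$ spanning a rectangle contained in $\lambda$. Hence every $\XX$ arising from a $[1^n]$-path still satisfies (C2).

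Next I would mirror the argument of Lemma \ref{lemma:C1holds} to show that no two cells of $\XX$ lie in the same \emph{row}. Suppose $x_a, x_b \in \XX$ share a row with $b<a$; one runs the same case analysis (whether $x_a$ is left or right of $x_b$, whether cells of $\XX_{a-1}$ lie above or below, etc.), uses Proposition \ref{prop:edgelabel} to compute the bounce-path labels, and derives contradictions from $\ell_a > \ell_b$ in each case (this inequality now plays the role that $\ell_a < \ell_b$ played before). Combined with the rectangle lemma, this yields the analog of Proposition \ref{prop:C1andC2hold}: every $\XX$ arising from a strictly-increasing-label path satisfies (C2) and has no two cells in the same row.

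For the converse, I would replay Proposition \ref{prop:ordering}: within each column of $\lambda$ list the cells of $\XX$ in that column from top to bottom, attach the East-South bounce-path contents, and sort all of $\XX$ so that these contents are strictly \emph{increasing}; Proposition \ref{prop:NWpaths} then gives a genuine arrow at each removal, and Lemma \ref{hero_lemma} supplies the length decrement, while uniqueness follows as before. The main obstacle is verifying that the bounce-path contents remain pairwise distinct across all of $\XX$ in this transposed setup, which requires the same careful case-checking as at the end of the proof of Proposition \ref{prop:ordering}, using (C2) together with the no-two-in-the-same-row condition to rule out coincidences between bounce paths originating in different columns.
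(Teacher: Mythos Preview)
Your proposal is correct and matches the paper's approach exactly: the paper's entire proof is the one-line remark that Theorem~\ref{thm:D_1^n} follows by ``adapting the proof of Theorem~\ref{thm:D_i} by working with columns instead of rows and decreasing labels instead by increasing labels,'' which is precisely the transposition you outline. If anything, your sketch is more detailed than what the paper provides; note, incidentally, that in the transposed setting the ``main obstacle'' you flag largely dissolves, since when no two cells of $\XX$ share a row every East--South bounce path runs straight East and the labels $\lambda_{i_a}-i_a+1$ are automatically distinct.
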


\section{Expansions of non-commutative $k$-Schur functions}\label{sec:expansions}

We end this paper by using our computations of $D$ operators to give expansions
of $k$-Schur functions, generalizing Theorem 4.6 of \cite{BSS11}, which in turn
was an extension of Berg, Bergeron, Thomas, and Zabrocki's \cite{BBTZ11}
expansion for the rectangle $R = (c^{k+1-c})$. The main result of this section
is an expansion of $\nckschur_\lambda$ for $\lambda = (c^{k-c},c-i)$ and
$i\geq0$. The case $i=0$ was first established in \cite{BBTZ11}, the case $i=1$
was established in \cite{BSS11}. We begin with a remark.

\begin{Remark}
\label{remark:otherresidues}
From the definition of the marked strong order graph $\downgraph$, there is an
edge from $x$ to $y$ for every pair $(i,j)$ satisfying $y \, t_{i,j} = x$ and
$i \leq 0 < j$. As was pointed out in \cite{LLMS10}, one can fix an integer $m
\in \{0, 1, \dots, k\}$ and work instead with transpositions $t_{i,j}$
satisfying $i\leq m<j$. In an analogous way one obtains operators $D_n^{(m)}$
on $\AA$, and the results and arguments in this article hold for these
operators (one needs only work with $m$-Grassmannian elements instead of
$0$-Grassmannian elements). The operator $D_n^{(m)}$ for $m \neq 0$
is \emph{not} equal to $D_n$, however, by Theorem~\ref{thm:restriction}, their
restrictions to the affine Fomin--Stanley subalgebra $\BB$ coincide:
explicitly, $D_n^{(m)}(\bb) = D_n(\bb)$ for all $m$ and all $\bb \in \BB$.
\end{Remark}

We next recall the expansion described in \cite{BBTZ11}.

\begin{Theorem}[\cite{BBTZ11}, Theorem 4.12]\label{thm:rectangle}
Suppose $R = (c^{k+1-c})$.
The non-commutative $k$-Schur function $\nckschur_R$ has the expansion: \[\nckschur_R = \sum_{\lambda \subseteq R} \uu_{{R\cup\lambda / \lambda}}.\]
\end{Theorem}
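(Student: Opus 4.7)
The plan is to apply the uniqueness principle following equation \eqref{eq:relationbetweenpairings}: $\nckschur_R$ is the unique element of $\BB$ whose single Grassmannian term is $\uu_{w_R}$ with coefficient $1$. Writing $E := \sum_{\lambda\subseteq R}\uu_{R\cup\lambda/\lambda}$ for the proposed right-hand side, it suffices to verify (i) $E\in\BB$ and (ii) $\uu_{w_R}$ is the unique Grassmannian basis element appearing in $E$ with non-zero coefficient, and that coefficient is~$1$.

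For (ii), the $\lambda=\emptyset$ summand contributes $\uu_{w_R}$ directly by Definition \ref{d:wordfromdiagram}. For $\lambda \neq \emptyset$, I would verify that either $\uu_{R\cup\lambda/\lambda} = 0$ (the residue-product fails to be reduced) or else it equals $\uu_w$ for some $w \notin W^0$, by direct inspection of the reading order of $R\cup\lambda/\lambda$: the cells appearing in positions shifted away from the southwest corner force the reading word to factor through a non-trivial element of $W_0$, destroying Grassmannian-ness.

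The main obstacle is (i), showing $E\in\BB$. My proposed approach is by induction on $|R|$ (equivalently on $k+1-c$) using the down operators developed in Section \ref{sec:comb_desc}. Applying $D_n$ to $E$ and invoking Theorem \ref{thm:D_i} on each non-vanishing summand $\uu_{R\cup\lambda/\lambda}$ expresses $D_n(E)$ as a sum over pairs $(\lambda,\XX)$, where $\XX$ is a subset of cells of $R\cup\lambda/\lambda$ satisfying (C1) and (C2). On the other hand, the $k$-Pieri rule predicts $D_n(\nckschur_R) = \sum_\mu \nckschur_\mu$ for certain strictly smaller $k$-bounded partitions $\mu$; by the inductive hypothesis, each such $\nckschur_\mu$ has an analogous expansion of the claimed form. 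The combinatorial heart of the argument is then a weight-preserving bijection between the pairs $(\lambda,\XX)$ on one side and the data indexing the expansion of $\sum_\mu\nckschur_\mu$ on the other. Constructing this bijection --- and verifying it respects the cyclically decreasing label condition defining strong strips --- is the main technical challenge; once established, induction closes the proof, with trivial base case $R=\emptyset$.
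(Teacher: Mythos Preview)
First, note that the paper does not supply its own proof of this theorem: it is quoted from \cite{BBTZ11} as an input to Section~\ref{sec:expansions}, so there is no in-paper argument to compare against.

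Your proposal for (i) has a structural gap that prevents the induction from closing. Applying $D_n$ to $\nckschur_R$ yields, by \cite[Theorem~4.7]{BSS12} (the same fact used in the proof of Theorem~\ref{thm:rectangle-strip}), the single element $\nckschur_{(c^{k-c},\,c-n)}$. For $n>0$ this shape is \emph{not} a $k$-rectangle, so the inductive hypothesis you invoke --- which is exactly Theorem~\ref{thm:rectangle} for a smaller rectangle --- says nothing about it. Indeed, the paper obtains the expansion of $\nckschur_{(c^{k-c},\,c-n)}$ (Theorem~\ref{thm:rectangle-strip}) \emph{from} Theorem~\ref{thm:rectangle}, so appealing to that expansion here would be circular.

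Two further issues would remain even if the above were repaired. First, matching $D_n(E)$ with $D_n(\nckschur_R)$ for all $n$ does not force $E\in\BB$: the $D_n$ are not known to separate arbitrary elements of $\AA$, and membership in $\BB$ is characterised by commutation with the $\hh_i$, not by the action of the down operators. Second, Theorem~\ref{thm:D_i} computes $D_n(\uu_\lambda)$ only for $\lambda$ a \emph{partition} contained in a $k$-rectangle; the summands $\uu_{R\cup\lambda/\lambda}$ of $E$ are indexed by skew shapes and are in general non-Grassmannian, so Theorem~\ref{thm:D_i} does not apply to them directly. One would first have to factor each $w_{R\cup\lambda/\lambda}$ via Theorem~\ref{thm:restriction} and control the resulting Grassmannian part, which you do not address.
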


\begin{Example}\label{eg:33}
Let $R = (3,3)$ and $k=4$. Then $\nckschur[4]_R$ is the sum of all the monomials in
$\uu_i$ corresponding to the reading words of the skew-partitions $(R\cup
\lambda) / \lambda$, where $\lambda$ is a partition contained inside the
rectangle $R$, as shown:

\begin{gather*}
\begin{array}{ccccc}
  \tikztableausmall{{0,1,2},{4,0,1},}
& \tikztableausmall{{X,1,2},{4,0,1},{\boldentry 3},} 
& \tikztableausmall{{X,1,2},{X,0,1},{\boldentry 3},{\boldentry 2},}
& \tikztableausmall{{X,X,2},{4,0,1},{\boldentry 3,\boldentry 4},}
& \tikztableausmall{{X,X,2},{X,0,1},{\boldentry 3,\boldentry 4},{\boldentry 2},}
\\
  \uu_1\uu_0\uu_4\uu_2\uu_1\uu_0
& \uu_3\uu_1\uu_0\uu_4\uu_2\uu_1
& \uu_2\uu_3\uu_1\uu_0\uu_2\uu_1
& \uu_4\uu_3\uu_1\uu_0\uu_4\uu_2
& \uu_2\uu_4\uu_3\uu_1\uu_0\uu_2
\\[1em]
  \tikztableausmall{{X,X,X},{4,0,1},{\boldentry 3,\boldentry 4,\boldentry 0},}
& \tikztableausmall{{X,X,2},{X,X,1},{\boldentry 3,\boldentry 4},{\boldentry 2,\boldentry 3},}
& \tikztableausmall{{X,X,X},{X,0,1},{\boldentry 3,\boldentry 4,\boldentry 0},{\boldentry 2},}
& \tikztableausmall{{X,X,X},{X,X,1},{\boldentry 3,\boldentry 4,\boldentry 0},{\boldentry 2,\boldentry 3},}
& \tikztableausmall{{X,X,X},{X,X,X},{\boldentry 3,\boldentry 4,\boldentry 0},{\boldentry 2,\boldentry 3,\boldentry 4},}
\\
  \uu_0\uu_4\uu_3\uu_1\uu_0\uu_4
& \uu_3\uu_2\uu_4\uu_3\uu_1\uu_2
& \uu_2\uu_0\uu_4\uu_3\uu_1\uu_0
& \uu_3\uu_2\uu_0\uu_4\uu_3\uu_1
& \uu_4\uu_3\uu_2\uu_0\uu_4\uu_3
\end{array}
\end{gather*}

\end{Example}

%For an element $w = s_{i_1} s_{i_2} \cdots s_{i_m}$ of $W$, we let $w^{+c} = s_{i_1 + c} s_{i_2 + c} \cdots s_{i_m+c} $ and extend this map multiplicatively to all of $W$. 
We are now ready to state our main result on expansions of non-commutative $k$-Schur functions.

\begin{Theorem}
\label{thm:rectangle-strip}
Let $0 \leq i \leq c$.
\[
    \nckschur_{(c^{k-c}, c-i)} =
    \sum_{\lambda \subseteq (c^{k+1-c})}
    \sum_{ \{ x_1, x_2, \dots, x_i \}}
        \uu_{(R \cup \lambda / \lambda) \setminus \{x_1, x_2, \dots, x_i\}},
\]
where the second sum is over all collections of $i$ cells in the rows below $R$ satisfying (C1) and (C2).
\end{Theorem}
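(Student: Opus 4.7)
The plan is to compute $D_i(\nckschur_R)$ in two different ways and equate the results. On one hand, a result of \cite{BSS12} identifies the restriction of $D_i$ to $\BB$ as the adjoint of multiplication by $\hh_i$; consequently $D_i(\nckschur_\mu) = \sum_{\nu} \nckschur_{\nu}$, summed over $k$-bounded partitions $\nu$ for which $w_\mu w_\nu^{-1}$ is cyclically decreasing of length $i$. For $\mu = R = (c^{k+1-c})$ and $i \leq c$, a quick inspection shows that $\nu = (c^{k-c}, c-i)$ is the unique such partition, so $D_i(\nckschur_R) = \nckschur_{(c^{k-c},c-i)}$. On the other hand, Theorem \ref{thm:rectangle} gives $\nckschur_R = \sum_{\lambda \subseteq R} \uu_{R \cup \lambda/\lambda}$, so it suffices to compute the termwise application of $D_i$ to this sum and verify that it matches the claimed formula.

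Since $\nckschur_R \in \BB$, Remark \ref{remark:otherresidues} allows the use of $D_i^{(c)}$ in place of $D_i$ when evaluating $D_i(\nckschur_R)$. Let $\sigma_c : \AA \to \AA$ be the algebra automorphism $\uu_r \mapsto \uu_{r + c \bmod (k+1)}$. The corresponding cyclic shift of $W$ carries strong covers and their labels from the $m = 0$ setting (underlying $D_i$) to those in the $m = c$ setting (underlying $D_i^{(c)}$), shifting all labels uniformly by $c$; since decreasing label sequences remain decreasing under this shift, this yields the intertwining relation $D_i^{(c)} \circ \sigma_c = \sigma_c \circ D_i$ on all of $\AA$. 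Conjugating Theorem \ref{thm:restriction} by $\sigma_c$ then produces the $c$-analogue
\[
    D_i^{(c)}(\uu_w \uu_v) = D_i^{(c)}(\uu_w)\,\uu_v \qquad \text{for every } v \in W_c := \langle s_r : r \neq c \rangle.
\]

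Factor the reading word as $\uu_{R \cup \lambda/\lambda} = A \cdot B$, where $A$ is the reading of the $\lambda$-shaped region placed below $R$ (read first, so on the left) and $B$ is the reading of the skew region $R/\lambda$ forming the upper part of $R \cup \lambda/\lambda$. By Remark \ref{remark:distinct}, every cell of $R$ has residue in $\{0,1,\ldots,k\} \setminus \{c\}$, so $B$ contains no letter $\uu_c$ and represents an element of $W_c$; hence $D_i^{(c)}(A \cdot B) = D_i^{(c)}(A) \cdot B$. Moreover, the below-$R$ region contains cells in the same relative positions as a standalone $\lambda$ but shifted downward by $k+1-c$ rows, so its residues are those of the standalone $\lambda$ translated by $+c \pmod{k+1}$; equivalently $A = \sigma_c(\uu_\lambda)$. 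Combining these observations with Theorem \ref{thm:D_i} applied to $\lambda \subseteq R$ gives
\begin{align*}
    D_i^{(c)}(A) \cdot B
    &= \sigma_c\bigl(D_i(\uu_\lambda)\bigr) \cdot B \\
    &= \sum_{\XX} \sigma_c\bigl(\uu_{\lambda \setminus \XX}\bigr) \cdot B \\
    &= \sum_{\XX} \uu_{(R \cup \lambda/\lambda) \setminus \XX},
\end{align*}
where $\XX$ ranges over all $i$-element subsets of cells of $\lambda$ satisfying (C1) and (C2); the last equality identifies each $\sigma_c(\uu_{\lambda \setminus \XX}) \cdot B$ as the reading word of $R \cup \lambda/\lambda$ with the cells of $\XX$ removed from its below-$R$ region. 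Summing over $\lambda \subseteq R$ completes the argument.

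The principal obstacle is the residue bookkeeping: verifying that $B$ lies in $W_c$ (which unlocks the $c$-analogue of Theorem \ref{thm:restriction}) and identifying the below-$R$ reading $A$ with $\sigma_c(\uu_\lambda)$ (which allows Theorem \ref{thm:D_i} to be applied to $\lambda$ as a $k$-bounded partition inside $R$). Once these identifications are in place, the theorem reduces to the special case Theorem \ref{thm:D_i} proved earlier in the section.
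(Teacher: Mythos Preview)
Your proof is correct and follows essentially the same approach as the paper: both compute $D_i(\nckschur_R)$ by switching to $D_i^{(c)}$ on $\BB$ (Remark~\ref{remark:otherresidues}) and then applying the $c$-analogue of Theorem~\ref{thm:D_i} termwise to the rectangle expansion of Theorem~\ref{thm:rectangle}. The paper's proof is terser---it cites \cite[Theorem~4.7]{BSS12} directly for $D_i(\nckschur_R) = \nckschur_{(c^{k-c},c-i)}$ and leaves the passage to the $m=c$ versions of Theorems~\ref{thm:restriction} and~\ref{thm:D_i} implicit via Remark~\ref{remark:otherresidues}---whereas you spell out that passage explicitly through the automorphism $\sigma_c$ and the factorization $A\cdot B$.
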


\begin{proof}
  Theorem 4.7 of \cite{BSS12} shows that $D_i( \nckschur_{(c^{k-c+1})} ) =  \nckschur_{(c^{k-c}, c-i)}  $.  By Remark \ref{remark:otherresidues}, $D_i = D_i^{(c)}$ on $\BB$. The statement now follows from applying Theorem \ref{thm:D_i} with $D_i^{(c)}$ to Theorem \ref{thm:rectangle}.
\end{proof}

\begin{Example}\label{eg:31}
    Let $R = (3,3)$ and $k=4$. Then $\nckschur[4]_{(3,1)}$ is the sum of all the monomials which occur after applying $D_2^{(3)}$ to the shapes starting in row $3$ in Example \ref{eg:33} (i.e. the red part). The first three terms in Example \ref{eg:33} are zero after application of $D_2^{(3)}$. The fourth term yields one monomial:
\begin{gather*}
\begin{array}{ccc}
 \tikztableausmall{{X,X,2},{4,0,1},{\boldentry X,\boldentry X},}
\\
 \uu_1\uu_0\uu_4\uu_2
\end{array}
\end{gather*}

The fifth term yields two:
\begin{gather*}
\begin{array}{cc}
 \tikztableausmall{{X,X,2},{X,0,1},{\boldentry X,\boldentry X},{\boldentry 2},}
& \tikztableausmall{{X,X,2},{X,0,1},{\boldentry 3,\boldentry X},{\boldentry X},}
\\
 \uu_2\uu_1\uu_0\uu_2
& \uu_3\uu_1\uu_0\uu_2
\end{array}
\end{gather*}

The sixth term yields three:
\begin{gather*}
\begin{array}{ccc}
  \tikztableausmall{{X,X,X},{4,0,1},{\boldentry X,\boldentry X,\boldentry 0},}
&   \tikztableausmall{{X,X,X},{4,0,1},{\boldentry X,\boldentry 4,\boldentry X},}
&   \tikztableausmall{{X,X,X},{4,0,1},{\boldentry 3,\boldentry X,\boldentry X},}
\\
 \uu_0\uu_1\uu_0\uu_4
& \uu_4\uu_1\uu_0\uu_4
& \uu_3\uu_1\uu_0\uu_4
\end{array}
\end{gather*}

The seventh term yields three:

\begin{gather*}
\begin{array}{ccc}
 \tikztableausmall{{X,X,2},{X,X,1},{\boldentry X,\boldentry X},{\boldentry 2,\boldentry 3},}
& \tikztableausmall{{X,X,2},{X,X,1},{\boldentry 3,\boldentry 4},{\boldentry X,\boldentry X},}
& \tikztableausmall{{X,X,2},{X,X,1},{\boldentry X,\boldentry 4},{\boldentry 2,\boldentry X},}
\\
 \uu_3\uu_2\uu_1\uu_2
& \uu_4\uu_3\uu_1\uu_2
& \uu_2\uu_4\uu_1\uu_2
\end{array}
\end{gather*}

The eighth term yields five:
\begin{gather*}
\begin{array}{ccccc}
 \tikztableausmall{{X,X,X},{X,0,1},{\boldentry X,\boldentry X,\boldentry 0},{\boldentry 2},}
& \tikztableausmall{{X,X,X},{X,0,1},{\boldentry X,\boldentry 4,\boldentry X},{\boldentry 2},}
& \tikztableausmall{{X,X,X},{X,0,1},{\boldentry 3,\boldentry X,\boldentry X},{\boldentry 2},}
& \tikztableausmall{{X,X,X},{X,0,1},{\boldentry 3,\boldentry X,\boldentry 0},{\boldentry X},}
& \tikztableausmall{{X,X,X},{X,0,1},{\boldentry 3,\boldentry 4,\boldentry X},{\boldentry X},}
\\
 \uu_2\uu_0\uu_1\uu_0
& \uu_2\uu_4\uu_1\uu_0
& \uu_2\uu_3\uu_1\uu_0
& \uu_0\uu_3\uu_1\uu_0
& \uu_4\uu_3\uu_1\uu_0
\end{array}
\end{gather*}

The ninth term yields seven:
\begin{gather*}
\begin{array}{cccc}
\tikztableausmall{{X,X,X},{X,X,1},{\boldentry X,\boldentry X,\boldentry 0},{\boldentry 2,\boldentry 3},}
& \tikztableausmall{{X,X,X},{X,X,1},{\boldentry X,\boldentry 4,\boldentry X},{\boldentry 2,\boldentry 3},}
& \tikztableausmall{{X,X,X},{X,X,1},{\boldentry 3,\boldentry X,\boldentry X},{\boldentry 2,\boldentry 3},}
& \tikztableausmall{{X,X,X},{X,X,1},{\boldentry 3,\boldentry 4,\boldentry 0},{\boldentry X,\boldentry X},}
\\
 \uu_3\uu_2\uu_0\uu_1
& \uu_3\uu_2\uu_4\uu_1
& \uu_3\uu_2\uu_3\uu_1
& \uu_0\uu_4\uu_3\uu_1
\end{array}
\end{gather*}

\begin{gather*}
\begin{array}{ccc}
 \tikztableausmall{{X,X,X},{X,X,1},{\boldentry X,\boldentry 4,\boldentry 0},{\boldentry 2,\boldentry X},}
& \tikztableausmall{{X,X,X},{X,X,1},{\boldentry 3,\boldentry 4,\boldentry X},{\boldentry X,\boldentry 3},}
& \tikztableausmall{{X,X,X},{X,X,1},{\boldentry 3,\boldentry 4,\boldentry X},{\boldentry 2,\boldentry X},}
\\
\uu_2\uu_0\uu_4\uu_1
& \uu_3\uu_4\uu_3\uu_1
& \uu_2\uu_4\uu_3\uu_1
\end{array}
\end{gather*}

The tenth term yields nine:
\begin{gather*}
\begin{array}{ccccc}
 \tikztableausmall{{X,X,X},{X,X,X},{\boldentry X,\boldentry X,\boldentry 0},{\boldentry 2,\boldentry 3,\boldentry 4},}
& \tikztableausmall{{X,X,X},{X,X,X},{\boldentry X,\boldentry 4,\boldentry X},{\boldentry 2,\boldentry 3,\boldentry 4},}
& \tikztableausmall{{X,X,X},{X,X,X},{\boldentry 3,\boldentry X,\boldentry X},{\boldentry 2,\boldentry 3,\boldentry 4},}
& \tikztableausmall{{X,X,X},{X,X,X},{\boldentry 3,\boldentry 4,\boldentry 0},{\boldentry X,\boldentry X,\boldentry 4},}
& \tikztableausmall{{X,X,X},{X,X,X},{\boldentry 3,\boldentry 4,\boldentry 0},{\boldentry X,\boldentry 3,\boldentry X},}
\\
 \uu_4\uu_3\uu_2\uu_0
& \uu_4\uu_3\uu_2\uu_4
& \uu_4\uu_3\uu_2\uu_3
& \uu_4\uu_0\uu_4\uu_3
& \uu_3\uu_0\uu_4\uu_3
\end{array}
\end{gather*}

\begin{gather*}
\begin{array}{ccccc}
 \tikztableausmall{{X,X,X},{X,X,X},{\boldentry 3,\boldentry 4,\boldentry 0},{\boldentry 2,\boldentry X,\boldentry X},}
& \tikztableausmall{{X,X,X},{X,X,X},{\boldentry X,\boldentry 4,\boldentry 0},{\boldentry 2,\boldentry X,\boldentry 4},}
& \tikztableausmall{{X,X,X},{X,X,X},{\boldentry X,\boldentry 4,\boldentry 0},{\boldentry 2,\boldentry 3,\boldentry X},}
& \tikztableausmall{{X,X,X},{X,X,X},{\boldentry 3,\boldentry X,\boldentry 0},{\boldentry 2,\boldentry 3,\boldentry X},}
\\
 \uu_2\uu_0\uu_4\uu_3
& \uu_4\uu_2\uu_0\uu_4
& \uu_3\uu_2\uu_0\uu_4
& \uu_3\uu_2\uu_0\uu_3
\end{array}
\end{gather*}

$\nckschur[4]_{3,1}$ is the sum of the $30$ terms above.
\end{Example}

\begin{Theorem}\label{rectangle-column}
Let $0 \leq i \leq k+1- c$. Then
\[
    \nckschur_{(c^{i}, (c-1)^{k+1-c-i})} =
    \sum_{\lambda \subseteq (c^{k+1-c})}
    \sum_{ \{ x_1, x_2, \dots, x_i \}}
        \uu_{(R \cup \lambda / \lambda) \setminus \{x_1, x_2, \dots, x_i\}},
\]
where the second sum is over all collections of $i$ cells in the rows below $R$, satisfying (C2) and no two of which lie in the same row.
\end{Theorem}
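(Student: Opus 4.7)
The plan is to mirror the proof of Theorem~\ref{thm:rectangle-strip}, exchanging the roles of rows and columns: apply the column down operator $D_{1^j}$ in place of the row operator $D_j$, and use Theorem~\ref{thm:D_1^n} in place of Theorem~\ref{thm:D_i}. The argument will break into the same three ingredients used for the rectangle-strip case.

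First, I would establish the column analog of [BSS12, Theorem~4.7]: on the rectangle $R = (c^{k+1-c})$, one has $D_{1^j}(\nckschur_R) = \nckschur_\mu$, where $\mu$ is the partition obtained from $R$ by removing a vertical strip of length $j$ from its rightmost column (so that the target shape $(c^i,(c-1)^{k+1-c-i})$ arises for the appropriate value of $j$). Under the isomorphism $\BB \cong \Lambda_{(k)}$, the operator $D_{1^j}$ corresponds to the adjoint $e_j^\perp$ of multiplication by the elementary symmetric function $e_j$. The $k$-Pieri rule for $e_j$ applied to a rectangular $k$-Schur function produces a single term, namely the rectangle with a vertical strip of length $j$ added; dualizing this statement gives the required identity. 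This is a routine transplant of the proof of [BSS12, Theorem~4.7] with horizontal strips replaced by vertical strips throughout.

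Second, Remark~\ref{remark:otherresidues} gives $D_{1^j} = D_{1^j}^{(c)}$ on the affine Fomin--Stanley subalgebra $\BB$. Combining this with the expansion $\nckschur_R = \sum_{\lambda \subseteq R} \uu_{R \cup \lambda/\lambda}$ supplied by Theorem~\ref{thm:rectangle} gives
\[
    \nckschur_{(c^{i},(c-1)^{k+1-c-i})}
    \;=\; D_{1^{j}}^{(c)}(\nckschur_R)
    \;=\; \sum_{\lambda \subseteq R} D_{1^{j}}^{(c)}\!\left(\uu_{R \cup \lambda/\lambda}\right)
\]
for the appropriate index $j$. Third, one applies Theorem~\ref{thm:D_1^n} (with base residue $c$ in place of $0$, as permitted by the same remark, and combined with Theorem~\ref{thm:restriction} so that the action can be computed on the Grassmannian factor of $w_{R\cup\lambda/\lambda}$ without interference from the $W_0$-part). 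For each $\lambda \subseteq R$ this produces precisely a sum over collections of cells of $R \cup \lambda / \lambda$ lying in the rows below $R$, satisfying (C2) and having no two cells in the same row, which is the stated right-hand side.

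The main obstacle is the first ingredient. All the bookkeeping in the remaining two steps is essentially identical to the corresponding bookkeeping in the proof of Theorem~\ref{thm:rectangle-strip}, with rows and columns interchanged. The single-term property of the $k$-Pieri rule for $e_j$ applied to a rectangle is the crucial input, and it must either be extracted from [BSS12] (where the analogous $h_j$ fact underlies Theorem~4.7) or proved directly using the (dual) $k$-Pieri formula for elementary symmetric functions. Once this column analog of [BSS12, Theorem~4.7] is in hand, the rest of the proof is a direct parallel of the rectangle-strip argument.
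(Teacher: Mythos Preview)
Your proposal is correct and follows essentially the same approach as the paper: the paper's proof is the single line ``This follows from Theorem~\ref{thm:D_1^n},'' which is shorthand for exactly the three-step argument you outline (the column analog of \cite[Theorem~4.7]{BSS12}, the passage from $D_{1^j}$ to $D_{1^j}^{(c)}$ via Remark~\ref{remark:otherresidues}, and then Theorem~\ref{thm:D_1^n} applied to the terms in Theorem~\ref{thm:rectangle}). You have correctly isolated the one non-formal ingredient, namely the vertical-strip analog of \cite[Theorem~4.7]{BSS12}.
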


\begin{proof} This follows from Theorem \ref{thm:D_1^n}.
\end{proof}

\begin{Example}\label{eg:22}
    Let $R = (3,3)$ and $k=4$. Similar to Example \ref{eg:31}, $\nckschur[4]_{(2,2)}$ is the sum of all the monomials which occur after applying $D_{[1,1]}^{(3)}$ to the shapes starting in row $3$ in Example \ref{eg:33} (i.e. the red part). The first, second, fourth and sixth terms in Example \ref{eg:33} are zero after application of $D_{[1,1]}^{(3)}$. The third term yields one monomial and the fifth term yields two:
\begin{gather*}
\begin{array}{ccc}
 \tikztableausmall{{X,1,2},{X,0,1},{\boldentry X},{\boldentry X},}
& \tikztableausmall{{X,X,2},{X,0,1},{\boldentry X,\boldentry 4},{\boldentry X},}
& \tikztableausmall{{X,X,2},{X,0,1},{\boldentry 3,\boldentry X},{\boldentry X},}
\\
 \uu_1\uu_0\uu_2\uu_1
& \uu_4\uu_1\uu_0\uu_2
& \uu_3\uu_1\uu_0\uu_2
\end{array}
\end{gather*}

The seventh term yields three terms, as does the eighth:
\begin{gather*}
\begin{array}{cccccc}
 \tikztableausmall{{X,X,2},{X,X,1},{\boldentry X,\boldentry 4},{\boldentry X,\boldentry 3},}
& \tikztableausmall{{X,X,2},{X,X,1},{\boldentry 3,\boldentry X},{\boldentry 2,\boldentry X},}
& \tikztableausmall{{X,X,2},{X,X,1},{\boldentry X,\boldentry 4},{\boldentry 2,\boldentry X},}
& \tikztableausmall{{X,X,X},{X,0,1},{\boldentry X,\boldentry 4,\boldentry 0},{\boldentry X},}
& \tikztableausmall{{X,X,X},{X,0,1},{\boldentry 3,\boldentry X,\boldentry 0},{\boldentry X},}
& \tikztableausmall{{X,X,X},{X,0,1},{\boldentry 3,\boldentry 4,\boldentry X},{\boldentry X},}
\\
 \uu_3\uu_4\uu_1\uu_2
& \uu_2\uu_3\uu_1\uu_2
& \uu_2\uu_4\uu_1\uu_2
& \uu_0\uu_4\uu_1\uu_0
& \uu_0\uu_3\uu_1\uu_0
& \uu_4\uu_3\uu_1\uu_0
\end{array}
\end{gather*}

The ninth term yields five terms:
\begin{gather*}
\begin{array}{ccccc}
\tikztableausmall{{X,X,X},{X,X,1},{\boldentry X,\boldentry 4,\boldentry 0},{\boldentry X,\boldentry 3},}
& \tikztableausmall{{X,X,X},{X,X,1},{\boldentry 3,\boldentry X,\boldentry 0},{\boldentry 2,\boldentry X},}
& \tikztableausmall{{X,X,X},{X,X,1},{\boldentry X,\boldentry 4,\boldentry 0},{\boldentry 2,\boldentry X},}
& \tikztableausmall{{X,X,X},{X,X,1},{\boldentry 3,\boldentry 4,\boldentry X},{\boldentry X,\boldentry 3},}
& \tikztableausmall{{X,X,X},{X,X,1},{\boldentry 3,\boldentry 4,\boldentry X},{\boldentry 2,\boldentry X},}
\\
 \uu_3\uu_0\uu_4\uu_1
& \uu_2\uu_0\uu_3\uu_1
&\uu_2\uu_0\uu_4\uu_1
& \uu_3\uu_4\uu_3\uu_1
& \uu_2\uu_4\uu_3\uu_1
\end{array}
\end{gather*}

The tenth term yields six terms:
\begin{gather*}
\begin{array}{cccccc}
 \tikztableausmall{{X,X,X},{X,X,X},{\boldentry X,\boldentry 4,\boldentry 0},{\boldentry X,\boldentry 3,\boldentry 4},}
& \tikztableausmall{{X,X,X},{X,X,X},{\boldentry 3,\boldentry X,\boldentry 0},{\boldentry 2,\boldentry X,\boldentry 4},}
& \tikztableausmall{{X,X,X},{X,X,X},{\boldentry 3,\boldentry 4,\boldentry X},{\boldentry 2,\boldentry 3,\boldentry X},}
& \tikztableausmall{{X,X,X},{X,X,X},{\boldentry X,\boldentry 4,\boldentry 0},{\boldentry 2,\boldentry X,\boldentry 4},}
& \tikztableausmall{{X,X,X},{X,X,X},{\boldentry X,\boldentry 4,\boldentry 0},{\boldentry 2,\boldentry 3,\boldentry X},}
& \tikztableausmall{{X,X,X},{X,X,X},{\boldentry 3,\boldentry X,\boldentry 0},{\boldentry 2,\boldentry 3,\boldentry X},}
\\
 \uu_4\uu_3\uu_0\uu_4
& \uu_4\uu_2\uu_0\uu_3
& \uu_3\uu_2\uu_4\uu_3
& \uu_4\uu_2\uu_0\uu_4
& \uu_3\uu_2\uu_0\uu_4
& \uu_3\uu_2\uu_0\uu_3
\end{array}
\end{gather*}
$\nckschur[4]_{2,2}$ is the sum of the $20$ terms above.
\end{Example}
We end by using our result to compute a structure coefficient.
\begin{Example}
Let $k=4$. We will compute the product $s^{(4)}_{3,1}s^{(4)}_{2,1}$.
To compute this, we will use the action defined in \cite{LM05} on $(k+1)$-cores. The non-zero terms will be the coefficients from the expression in Example \ref{eg:31} which act non-trivially on the $5$-core $(2,1)$. Only four non-zero terms appear: \[\uu_3 \uu_2 \uu_0 \uu_3(2,1) = (4,2,1,1) \hspace{.3in} \uu_4 \uu_3 \uu_2 \uu_0 (2,1) = (5,2,2)\] \[\uu_3 \uu_1 \uu_0 \uu_2 (2,1) = (4,3,1) \hspace{.3in} \uu_4 \uu_3 \uu_2 \uu_3 (2,1) = (5,1,1,1).\]
Each of these terms has coefficient $1$ in the expansion from Example \ref{eg:31}. Using the bijection from \cite{LM05} between $5$-cores and $4$-bounded partitions gives us:
\[s^{(4)}_{3,1}s^{(4)}_{2,1} = 
s^{(4)}_{3, 2, 1, 1} + s^{(4)}_{3, 2, 2} + s^{(4)}_{3, 3, 1} + s^{(4)}_{4, 1, 1, 1}.\]
\end{Example}

\bibliographystyle{halpha}
\bibliography{references} 

\newcommand{\etalchar}[1]{$^{#1}$}
\begin{thebibliography}{BBTZ11}

\bibitem[BB05]{BB05}
Anders Bj{\"o}rner and Francesco Brenti.
\newblock {\em Combinatorics of {C}oxeter groups}, volume 231 of {\em Graduate
  Texts in Mathematics}.
\newblock Springer, New York, 2005.

\bibitem[BBTZ11]{BBTZ11}
C.~{Berg}, N.~{Bergeron}, H.~{Thomas}, and M.~{Zabrocki}.
\newblock {Expansion of $k$-Schur functions for maximal $k$-rectangles within
  the affine nilCoxeter algebra}.
\newblock {\em ArXiv e-prints}, July 2011, 1107.3610.

\bibitem[BBZ12]{BBZ}
Carolina Benedetti, Nantel Bergeron, and Mike Zabrocki.
\newblock Gromov-witten invariants for quasi-rectangles.
\newblock {\em In Preparation}, 2012.

\bibitem[BSS11]{BSS11}
C.~{Berg}, F.~{Saliola}, and L.~{Serrano}.
\newblock {The down operator and expansions of near rectangular k-Schur
  functions}.
\newblock {\em ArXiv e-prints}, December 2011, 1112.4460.

\bibitem[BSS12]{BSS12}
C.~Berg, F.~Saliola, and L.~Serrano.
\newblock Pieri operators on the affine nilcoxeter algebra.
\newblock {\em Trans. Amer. Math. Soc.}, (to appear), 2012.

\bibitem[Lam06]{Lam06}
Thomas Lam.
\newblock Affine {S}tanley symmetric functions.
\newblock {\em Amer. J. Math.}, 128(6):1553--1586, 2006.

\bibitem[Lam08]{Lam08}
Thomas Lam.
\newblock Schubert polynomials for the affine {G}rassmannian.
\newblock {\em J. Amer. Math. Soc.}, 21(1):259--281, 2008.

\bibitem[Lam10]{Lam10}
T.~Lam.
\newblock {Stanley symmetric functions and Peterson algebras}.
\newblock {\em ArXiv e-prints}, July 2010, 1007.2871.

\bibitem[LL12]{leung_gromov-witten_2012}
Naichung~Conan Leung and Changzheng Li.
\newblock {Gromov-Witten} invariants for {$G/B$} and {Pontryagin} product for
  {$\Omega K$}.
\newblock {\em Trans. Amer. Math. Soc.}, 364(05):2567--2599, January 2012.

\bibitem[LLM03]{LLM03}
L.~Lapointe, A.~Lascoux, and J.~Morse.
\newblock Tableau atoms and a new {M}acdonald positivity conjecture.
\newblock {\em Duke Math. J.}, 116(1):103--146, 2003.

\bibitem[LLMS10]{LLMS10}
Thomas Lam, Luc Lapointe, Jennifer Morse, and Mark Shimozono.
\newblock Affine insertion and {P}ieri rules for the affine {G}rassmannian.
\newblock {\em Mem. Amer. Math. Soc.}, 208(977):xii+82, 2010.

\bibitem[LM03]{LM03}
L.~Lapointe and J.~Morse.
\newblock Schur function analogs for a filtration of the symmetric function
  space.
\newblock {\em J. Combin. Theory Ser. A}, 101(2):191--224, 2003.

\bibitem[LM05]{LM05}
Luc Lapointe and Jennifer Morse.
\newblock Tableaux on {$k+1$}-cores, reduced words for affine permutations, and
  {$k$}-{S}chur expansions.
\newblock {\em J. Combin. Theory Ser. A}, 112(1):44--81, 2005.

\bibitem[LM07]{LM07}
Luc Lapointe and Jennifer Morse.
\newblock A {$k$}-tableau characterization of {$k$}-{S}chur functions.
\newblock {\em Adv. Math.}, 213(1):183--204, 2007.

\bibitem[LM08]{LM08}
Luc Lapointe and Jennifer Morse.
\newblock Quantum cohomology and the {$k$}-{S}chur basis.
\newblock {\em Trans. Amer. Math. Soc.}, 360(4):2021--2040, 2008.

\bibitem[LS07]{LS07}
Thomas~F. Lam and Mark Shimozono.
\newblock Dual graded graphs for {K}ac-{M}oody algebras.
\newblock {\em Algebra Number Theory}, 1(4):451--488, 2007.

\bibitem[LS10]{LSGW}
Thomas Lam and Mark Shimozono.
\newblock Quantum cohomology of {$G/P$} and homology of affine {G}rassmannian.
\newblock {\em Acta Math.}, 204(1):49--90, 2010.

\bibitem[MS12]{MS}
Jennifer Morse and Anne Schilling.
\newblock A combinatorial formula for fusion coefÞcients.
\newblock {\em DMTSC, to appear}, 2012.
\newblock FPSAC 2012.

\bibitem[S{\etalchar{+}}12]{sage}
W.\thinspace{}A. Stein et~al.
\newblock {\em {S}age {M}athematics {S}oftware ({V}ersion 4.7.2)}.
\newblock The Sage Development Team, 2012.
\newblock {\tt http://www.sagemath.org}.

\bibitem[SCc12]{sage-combinat}
The {S}age-{C}ombinat community.
\newblock {\em {{S}age-{C}ombinat}: enhancing Sage as a toolbox for computer
  exploration in algebraic combinatorics}.
\newblock The Sage Development Team, 2012.
\newblock {\tt http://combinat.sagemath.org}.

\end{thebibliography}
\end{document}